\theoremstyle{plain}
\newtheorem{theorem}{Theorem}[section]
\newtheorem{lemma}[theorem]{Lemma}
\newtheorem{corollary}[theorem]{Corollary}
\theoremstyle{definition}
\newtheorem{definition}[theorem]{Definition}
\newtheorem{example}[theorem]{Example}
\newtheorem{remark}[theorem]{Remark}
\numberwithin{equation}{section}
\def\hua{\mathcal}
\def\kong{\mathbb}
\def\<{\langle}
\def\>{\rangle}
\def\ZZ{\mathbb{Z}}
\def\Add{\operatorname{add}}
\def\Aut{\operatorname{Aut}}
\def\Sim{\operatorname{Sim}}
\def\Hom{\operatorname{Hom}}
\def\End{\operatorname{End}}
\def\Ext{\operatorname{Ext}}
\def\Irr{\operatorname{Irr}}
\def\tri{\triangle}
\def\diff{\operatorname{d}}
\def\Br{\operatorname{Br}}
\newcommand{\h}{\operatorname{\hua{H}}}            
\renewcommand{\k}{\mathbf{k}}
\renewcommand{\mod}{\operatorname{mod}}
\newcommand{\Cone}{\operatorname{Cone}}
\def\numbers{\begin{enumerate}[label=\arabic*{$^\circ$}.]}
\def\ends{\end{enumerate}}
\newcommand{\id}{\operatorname{id}}
\newcommand{\EGp}{\operatorname{EG}^\circ}       
\newcommand{\C}[2]
{\operatorname{\hua{C}}_{#1}(#2)}               
\newcommand{\CEG}[2]{\operatorname{CEG}_{#1}(#2)}             
\newcommand{\D}{\operatorname{\hua{D}}}
\newcommand{\per}{\operatorname{per}}
\def\zero{\hua{H}_\Gamma}
\def\arrow{red}
\def\surf{\mathbf{S}}                       
\def\Dfd{\D(\PP)}
\newcommand{\MCG}{\operatorname{MCG}}
\newcommand{\EGT}{\operatorname{EG}^{\bowtie}}
\newcommand{\PP}{\Gamma_{\surf}}             
\newcommand\Dehn[1]{\mathrm{D}_{#1}}
\renewcommand\bowtie{\times}
\def\st{\mathbf{T}_{\bowtie}}
\def\dexc{red}
\def\T{\mathbf{T}}
\def\stab{\operatorname{stab}}
\def\TA{{\mathbf{A}^\bowtie}(\surf)}
\def\obj{\hua{C}^\bowtie(\surf)}
\def\J{{J}}
\def\QP{\Gamma(Q,W)}
\def\M{\mathbf{M}}
\def\P{\mathbf{P}}
\def\PM{\MCG_{\bullet}}
\def\TM{\MCG_{\bowtie}}
\title{Tagged mapping class groups: Auslander-Reiten translation}
\author{{\small Thomas Br\"{u}stle  \and Yu Qiu }}
\thanks{Supported by the NSERC of Canada and Bishop's University}
\date{\today}
\begin{document}
\begin{abstract}
    We give a geometric realization, the tagged rotation,
    of the AR-translation on the generalized cluster category associated to a surface $\surf$ with marked points and non-empty boundary,
    which generalizes Br\"{u}stle-Zhang's result for the puncture free case.

    As an application, we show that the intersection of
    the shifts in the 3-Calabi-Yau derived category $\Dfd$ associated to the surface
    and the corresponding Seidel-Thomas braid group of $\Dfd$ is empty,
    unless $\surf$ is a polygon with at most one puncture (i.e. of type A or D).

    \vskip .3cm
    {\parindent =0pt
    \it Key words:} Mapping class group, Auslander-Reiten translation,
    triangulated surface, cluster theory, braid group
\end{abstract}

\maketitle

\section{Introduction}

Fomin, Shapiro and Thurston studied in  \cite{FST} the cluster combinatorics of a marked surface,
that is, an oriented surface $\surf$ with a finite set of marked points $\M$ on the boundary and a finite set of punctures $\P$ inside $\surf$. We do not consider closed surfaces in this paper, that is we assume $\surf$ to have a non-empty boundary.
Studying the relation between Teichm\"uller theory and  cluster algebras, it has been shown in \cite{GSV,FG1,FG2} that the flip of an arc in a (ideal) triangulation corresponds to a mutation of a cluster variable. However, self-folded triangles do not permit to flip the internal arc, whereas the mutation of a cluster variable is always defined.
In order to allow for flips at all arcs,  Fomin, Shapiro and Thurston introduce in  \cite{FST} the concept of tagged triangulations, which are (ideal) triangulations equipped with a tagging at each puncture. The thus obtained exchange graph $\EGT(\surf)$ of tagged triangulations, encoding flips of tagged triangulations, is isomorphic to the cluster exchange graph of the cluster algebra ${\mathcal A}(\surf)$ defined by the marked surface  $\surf$.

The aim of this paper is to compare geometric properties of the marked surface $\surf$ with properties of the categorification $\C{}{\surf}$ of the cluster algebra ${\mathcal A}(\surf)$:
Labardini-Fragoso associated in \cite{LF} a quiver with potential to the marked surface $\surf$ which allows to use
Amiot's  construction in \cite{A}  to define a 2-Calabi-Yau category $\C{}{\surf}$.
It is  shown in \cite{CKLP}  that the cluster-tilting objects in $\C{}{\surf}$ correspond bijectively to
the clusters of the cluster algebra ${\mathcal A}(\surf)$.
Moreover,
the set $\TA$ of tagged arcs in $\surf$ corresponds bijectively to the set $\obj$ of reachable indecomposable objects
in $\C{}{\surf}$, and we denote by $$\Aut_0\C{}{\surf}=\Aut\obj/\stab\obj$$
the group of auto-equivalences of $\C{}{\surf}$ that preserve the set of reachable indecomposable objects
modulo the subgroup of autoequivalences that act as identity on $\obj$.
We view this group as an analogue of the group of cluster algebra automorphisms studied in \cite{ASS}.

On the geometric side, we consider the group of orientation-preserving  diffeomorphisms of $\surf$ that preserve (not necessarily pointwise)
the set of marked points $\M$, the set of punctures $\P$ and the boundary.
The marked mapping class group $\PM(\surf)$ of $\surf$ is the quotient of this group by the subgroup of diffeomorphisms that are isotopic to the identity.
Denote by
$$\TM(\surf) = \PM(\surf)\ltimes(\kong{Z}_2)^p$$
 the tagged mapping class group of $\surf$
consisting of elements $(\varphi,\delta)$, where
$\varphi\in\PM(\surf)$ and $\delta$ is a $\{\pm1\}$-sign on each point in $\mathbf{P}$.
An element in $\TM(\surf)$ acts on the set $\TA$ and hence on  $\obj$, which induces an embedding
\begin{gather}
 \TM(\surf) \hookrightarrow \Aut_0\C{}{\surf} \label{embed}
 \end{gather}
The category  $\C{}{\surf}$ admits a distinguished automorphism, the Auslander Reiten translation, which in this context coincides with the suspension functor of the triangulated category $\C{}{\surf}$, see \cite{KR}.
The first aim of this article is to show that the Auslander-Reiten translation can be realized by an element in the tagged mapping class group  $\TM(\surf)$.
For each boundary component $Y$ with $m$ marked points, denote by $\rho_Y$ the $m$-th root of the Dehn twist around $Y$, that is, simultaneous rotation to the next marked point on $Y$. Then we define
the (universal) tagged rotation $\varrho$ as the permutation on $\TA$
induced by the element
\begin{gather*}
    \varrho=\prod_{Y\subset\partial\surf} \rho_Y \cdot \prod_{P\in\P} \delta^P
\end{gather*}
in $\TM(\surf)$ where the first product is over all connected components $Y$ of $\partial\surf$ and the second product
is a simultaneous change of tagging.
As one of the main results of this paper we show in Theorem \ref{thm:T-rotation} that the tagged rotation $\varrho\in\TM(\surf)$ on $\TA$ becomes the shift $[1]$ on $\obj$.

This result is known from \cite{Sch} for the case of a punctured disc and from \cite{BZ} for all unpunctured surfaces. In fact, the cluster category of an unpunctured surface is explicitly given by a combinatorial description of string and band objects, and the shift functor $[1]$ is well-known from the theory of string algebras.
These methods are not available in the general situation
since there is no explicit description of the (indecomposable) objects and morphisms yet.
Instead, we show that one can always choose a triangulation such that the operation of the shift functor in the category  $\C{}{\surf}$ corresponds to the tagged flip of an arc.
The proof is thus an interplay between the triangulated structure of the category $\C{}{\surf}$ and the structure of the triangulations of the surface $\surf$.
In fact, the result we obtain helps to understand the indecomposable objects and irreducible  morphisms of $\C{}{\surf}$, since these are encoded in the Auslander-Reiten triangles of the triangulated category $\C{}{\surf}$.
A further study aiming to generalize results in \cite{BZ} to
the unpunctured case is undertaken in \cite{QZ}.

One motivation to study such a geometric realization of the shift functor comes from physics: to compute the complete spectrum of a BPS particle, one studies maximal green sequences which go from a triangulation to its shift in $\C{}{\surf}$, see \cite{CCV:Braids}. Our result allows to determine the tagged triangulation for the endpoint
of any maximal green mutation sequence from the tagged triangulation of the starting point,
see \cite{BDP} (cf. \cite{Q3}). Note that we do not prove the existence of such a (finite) maximal green sequence, we can only provide a guidance in the search for maximal green sequences: if one exists, it needs to end in the triangulation obtained by applying the tagged rotation $\varrho$.

An invariant induced by the tagged rotation is the \emph{order} of a tagged arc $\alpha$,
that is, the minimal natural number $m$ satisfying $X_\alpha[m]=X_\alpha$,
where $X_\alpha$ is the object corresponding to $\alpha$ in $\C{}{\surf}$.
By convention, the order is infinite if such an $m$ does not exist.
We study in the last section of this article the order $m$ of the shift operator $[1]$ of $\C{}{\surf}$, and compare the action of the shift functor to the action of the Seidel-Thomas braid group $\Br\surf$ which is
generated by spherical twists.
We show in Theorem~\ref{thm:shift} that the intersection of the shifts and $\Br\surf$ is empty,
unless $\surf$ is a polygon with at most one puncture (i.e. of type A or D).

Having realized one distinguished automorphism of $\C{}{\surf}$ geometrically,
we further show  (Proposition~\ref{thm:auto}) that every cluster automorphism
is induced from elements in the corresponding tagged mapping class group
except for three cases (the marked surfaces corresponding to type $D_2$, $D_4$ and $\widetilde{D}_4$).

\subsection*{Acknowledgements}
Both authors thank the referee for numerous comments that helped improving a first version of the paper. Moreover, Y. Qiu would like to thank B. Keller for sharing his expertise on cluster theory.

\section{Preliminaries}\label{Preliminaries}
\subsection{Quiver with potential from surface}
Throughout the article, $\surf$ denotes a \emph{marked surface} in the sense of \cite{FST},
that is, a connected Riemann surface with a fixed orientation endowed
with a finite set of marked points $\M$ on the boundary $\partial\surf$
and a finite set of punctures $\P$ inside $\surf$
such that each connected component of the boundary of $\surf$
contains at least one marked point.
Unless otherwise stated, we will always suppose that $\partial\surf \neq \emptyset.$

We recall the following definitions from \cite{FST}, see there for the precise conditions for everything being well-defined:
Curves in $\surf$ are considered up to isotopy with respect to the sets of marked points and punctures.
Two curves are called \emph{compatible} if they do not intersect in their relative interior.
A \emph{simple} curve has no self-intersection except possibly on the endpoints, and
an \emph{arc} in $\surf$ is a simple curve whose both endpoints are marked points or punctures
and which is not isotopic to a boundary component.

An \emph{ideal triangulation} $\T$ of $\surf$ is
a maximal collection of pairwise compatible arcs in $\surf$.
Any triangulation $\T$ of $\surf$ consists of
\begin{gather}\label{eq:n}
n=6g+3p+3b+m-6
\end{gather}
arcs (ordinary or tagged), where $g$ is the genus of $\surf$, $b$ is the number of boundary components and $m$ and $p$ denote the number of marked points and punctures.
The number $n$ is called the \emph{rank} of the surface $\surf$.
To exclude a few cases where the surface does not admit a triangulation we assume from now on
that $n \ge 1.$

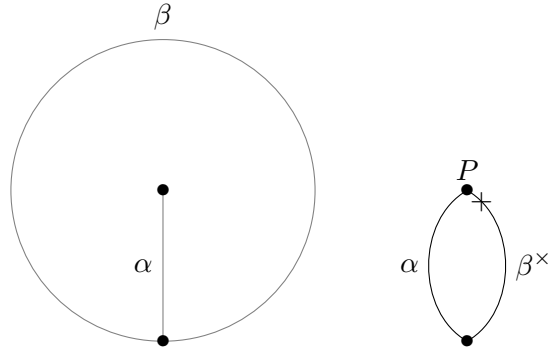
\begin{figure}[ht]\centering
\begin{tikzpicture}
\draw[gray,thin] (0,0) circle (2) to (0,-2);
\draw (0,0) node {$\bullet$} (0,-2) node {$\bullet$};
\draw (0,2) node[above] {$\beta$};\draw (0,-1) node[left] {$\alpha$};
\draw(4,-2)to[bend left=60](4,0);\draw (3.5,-1) node[left] {$\alpha$};
\draw(4,-2)to[bend right=60](4,0);
\draw (4.5,-1) node[right] {$\beta^{\bowtie}$};
\draw(4,-2)node{$\bullet$}
(4,0)node{$\bullet$}(4.19,-.15)node{$+$}(4,0)node[above]{$P$};
\end{tikzpicture}
\caption{A self-folded triangle and the corresponding tagged version}
\label{fig:self-folded}
\end{figure}

The \emph{flip of an arc} $\alpha$ in a triangulation $\T$ is the unique arc $\alpha' \neq \alpha$ which forms a triangulation with the remaining arcs of $\T$.
We denote by $\EGp(\surf)$  the \emph{exchange graph of ideal triangulations}
which is formed by the ideal triangulations of $\surf$, with an edge between $\T$ and $\T'$ whenever $\T'$ is obtained from $\T$ by the flip of an arc.
An arc in an ideal triangulation can always be flipped except when it is the arc $\alpha$ connecting to the internal puncture of a self-folded triangle as in the left side of figure \ref{fig:self-folded}.
To overcome this shortcoming, Fomin, Shapiro and Thurston introduced in \cite{FST} the concept of tagging which allows to distinguish two ways an arc can end in a puncture (tagged or untagged). They subsequently develop the concepts of tagged triangulations, tagged flips and the exchange graph of tagged triangulations, which is denoted by $\EGT(\surf)$.
Fomin, Shapiro and Thurston note that $\EGT(\surf)$ can be obtained by gluing $2^p$ copies of $\EGp(\surf)$.

\begin{definition}\label{def-potential}
We have the following data associated to
a tagged triangulation $\st$:
\begin{itemize}
\item The signed adjacency matrix $\mathrm{B}=\mathrm{B}(\st)$.
The rows and columns of $B$ are naturally labeled by the arcs in $\st$
(say from $1$ to $n$).
It is skew-symmetric and all its entries $b_{ij}$ are in $\{0,\pm1,\pm2\}$, see \cite[Definition~4.1]{FST} for more details.
\item The quiver $Q=Q(\st)$ corresponding to $\mathrm{B}(\st)$  is the quiver whose vertices are the arcs in $\st$ and
the number of arrows from $i$ to $j$ equals $b_{ij}$.
For instance, 
the quiver for a triangle is shown in Figure~\ref{fig:first}.
\item The potential $W=W(\st)$  is defined as a sum
of certain cycles in the complete path algebra $\widehat{\k Q}$,
where $\k$ is an algebraic closed field, see \cite{CLF}.
If $\partial\surf\neq\emptyset$, this potential is rigid (and thus non-degenerate) by
\cite{LF}.
\end{itemize}
\end{definition}

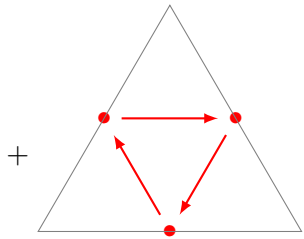
\begin{figure}[t]\centering
  \begin{tikzpicture}
  \draw(-2,0)node{+};
  \foreach \j in {1,...,3}  { \draw (120*\j-30:2) coordinate (v\j);}
    \path (v1)--(v2) coordinate[pos=0.5] (x3)
              --(v3) coordinate[pos=0.5] (x1)
              --(v1) coordinate[pos=0.5] (x2);
    \foreach \j in {1,...,3}{\draw (x\j) node[red] (x\j){$\bullet$};}
    \draw[->,>=latex,red,thick] (x1) to (x3);
    \draw[->,>=latex,red,thick] (x3) to (x2);
    \draw[->,>=latex,red,thick] (x2) to (x1);
    \draw[gray,thin] (v1)--(v2)--(v3)--cycle;
  \end{tikzpicture}
\caption{The quiver associated to a triangle}
\label{fig:first}
\end{figure}

\subsection{Jacobian algebra and Ginzburg algebra}
Let $Q$ be a finite quiver and $W$ a potential on $Q$. The \emph{Jacobian algebra} of the quiver with potential $(Q, W)$,
denoted by $\J(Q, W)$, is the quotient of the complete path algebra
$\widehat{\k Q}$ by the closure of the ideal generated by $\partial_a W$,
where $a$ runs over all arrows of $Q$.
The pair $(Q, W)$ is called \emph{Jacobi-finite} provided $\J(Q, W)$ is finite-dimensional.
The quiver with potential associated to a triangulation of a surface with non-empty boundary is
Jacobi-finite by \cite{CLF}.
The Jacobian algebra can be seen as the zeroth cohomology of the \emph{Ginzburg dg algebra} $\Gamma=\QP$  of $(Q, W)$
which is constructed as follows (\cite[Section~7.2]{K10}):

Let $\overline{Q}$ be the graded quiver with the same vertices as $Q$
and whose arrows are the arrows in $Q$ in degree $0$,
an arrow $a^*:j\to i$ with degree $-1$ for each arrow $a:i\to j$ in $Q$ and
a loop $e^*:i\to i$ with degree $-2$ for each vertex $e$ in $Q$.

The underlying graded algebra of $\QP$ is the completion of
the graded path algebra $\k \overline{Q}$.
The differential $d$ of $\QP$ is the unique continuous linear endomorphism homogeneous
of degree $1$ which satisfies the Leibniz rule and
takes the following values on the arrows of $\overline{Q}$:
\begin{gather*}
    \begin{cases}
    \diff a=0,\quad
    \diff a^*=\partial_a W,&\forall a\in Q_1,\\
    \diff \sum_{e\in Q_0} e^*=\sum_{a\in Q_1} \, [a,a^*].
    \end{cases}
\end{gather*}
Denote by $\D_{fd}(\Gamma)$ and $\per(\Gamma)$
the \emph{finite dimensional derived category} of $\QP$ and
the \emph{perfect derived category }of $\QP$, respectively (cf. \cite[Section~7.3]{K10}).
In case $(Q,W)$ is Jacobi-finite, Amiot introduced in \cite{A} the \emph{generalized cluster category} $\C{}{\Gamma}$ as the quotient category $\per(\Gamma)/\D_{fd}(\Gamma)$.
In other words, there is an exact sequence of triangulated categories
\begin{gather}\label{eq:ses}
    0 \to \D_{fd}(\Gamma) \to  \per(\Gamma) \to \C{}{\Gamma} \to 0.
\end{gather}

A \emph{cluster tilting set} $\hua{P}=\{P_j\}_{j=1}^n$ in $\C{}{\Gamma}$ is
an Ext-configuration, i.e. a maximal collection
of non-isomorphic indecomposables such that $\Ext_{\C{}{\Gamma}}^1(P_i, P_j)=0$.
Note that $n$ is the number of vertices in $Q$.
The \emph{mutation} $\mu_i$ at the $i$-th object
acts on a cluster tilting set $\{P_j\}_{j=1}^n$ by replacing $P_i$ by
another object $P_i'$ which can be calculated as follows: Define
     \begin{gather}
         \label{eq:mutate1}
        P_i^\sharp = \Cone(P_i \to \bigoplus_{j\neq i} \Irr(P_i,P_j)^*\otimes P_j)
           \end{gather}
    and
    \begin{gather}\label{eq:mutate2}
      P_i^\flat =  \Cone(\bigoplus_{j\neq i} \Irr(P_j,P_i)\otimes P_j \to P_i)[-1]
    \end{gather}
    where $\Irr(P_i,P_j)$ is the space of irreducible maps $P_i\to P_j$
    in the additive subcategory $\Add \mathbf{P}$ of $\C{}{\Gamma}$,
    for $\mathbf{P}=\bigoplus_{i=1}^n P_i$.
    One then checks that the two objects $ P_i^\sharp$ and $P_i^\flat$ are isomorphic and defines $P_i'$ to be either of them, see \cite{A}.

    Obviously, the indecomposable summands of $\Gamma$ form a cluster tilting set of $\C{}{\Gamma}$. An arbitrary cluster tilting set of $\C{}{\Gamma}$ is called  \emph{reachable} if it can be obtained from $\Gamma$ by a finite sequence of mutations. An \emph{indecomposable reachable object} of $\C{}{\Gamma}$ is an object that is contained in a reachable cluster tilting set.

A key property of mutation is that it is an involution, i.e. we have
\[\mu_i^2(\hua{P})=\hua{P}\quad \mbox{ for all possible } i.\]

If $\Gamma=\Gamma(Q,W)$ is given by the quiver and potential associated to a triangulation  $\T$ of $\surf$, then the category  $\D_{fd}(\Gamma)$ and the cluster category $\C{}{\Gamma}$ are (up to derived equivalence) independent of the choice of the triangulation $\T$: as shown in \cite{FST}, any two tagged triangulations of a surface with non-empty boundary are connected by a sequence of flips. In \cite{LF2} it is further shown that the corresponding quivers with potential are also related by a sequence of mutations, and it follows then from \cite{K10} that the corresponding categories $\D_{fd}(\Gamma)$   are equivalent as triangulated categories, likewise for $\C{}{\Gamma}$.
We therefore write simply $\D(\surf)= \D_{fd}(\Gamma)$ and $\C{}{\surf}=\C{}{\Gamma}$ in this case.
Moreover, we denote by $\CEG{*}{\surf}$ the
 \emph{exchange graph}  of reachable cluster tilting sets of $\C{}{\surf}$, that is,
the connected unoriented graph  whose vertices are the reachable cluster tilting sets
and whose edges are the mutations. We refer to \cite{BY} for more details on and equivalent definitions of the graph $\CEG{*}{\surf}$.
Let $\obj$ be the set consisting of objects that appear
in some cluster tilting set $\hua{P}$ in $\CEG{*}{\surf}$.

\subsection{The canonical bijection}
By the correspondence between tagged arcs and cluster variables
(\cite[Theorem~7.11]{FST}) and
the correspondence between cluster variables and indecomposable reachable objects
(\cite[Corollary~3.5]{CKLP}),
we get a  bijection between $\TA$ and $\obj$. Both correspondences are defined using a fixed triangulation, but are shown to commute with mutations. We therefore obtain the following lemma.

\begin{lemma}\label{lem:bij}
There is a canonical bijection
\begin{gather}\label{eq:zeta}
    \zeta\colon\TA\to\obj
\end{gather}
which induces the isomorphism (between graphs)
\begin{gather}\label{eq:EGT}
  \EGT(\surf)\cong\CEG{*}{\surf},
\end{gather}

which sends a tagged triangulation $\st$ to
the cluster tilting set consisting of objects $\{ \zeta(\alpha) \mid \alpha\in\st\}$
and a tagged flip in $\EGT(\surf)$ to a mutation in $\CEG{*}{\surf}$.
\end{lemma}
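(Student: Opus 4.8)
The plan is to realize $\zeta$ as the composite of the two bijections cited above and then to check that this composite carries the combinatorial structure of flips to the categorical structure of mutations. First I would fix an auxiliary tagged triangulation $\st_0$ of $\surf$, with arcs labelled $1,\dots,n$. This single choice simultaneously pins down the initial seed of the cluster algebra $\mathcal{A}(\surf)$ and the initial cluster tilting set of $\C{}{\surf}$ (the summands of $\Gamma=\Gamma(Q(\st_0),W(\st_0))$), both indexed by these same arcs. With respect to $\st_0$, \cite[Theorem~7.11]{FST} gives a bijection between $\TA$ and the cluster variables of $\mathcal{A}(\surf)$ sending tagged triangulations to clusters and tagged flips to seed mutations, while \cite[Corollary~3.5]{CKLP} gives a bijection between the cluster variables and $\obj$ sending clusters to reachable cluster tilting sets and seed mutations to the categorical mutations $\mu_i$ of \eqref{eq:mutate1}--\eqref{eq:mutate2}. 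Defining $\zeta$ to be their composite yields the map \eqref{eq:zeta}, which is a bijection because each factor is.

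Next I would establish the graph isomorphism \eqref{eq:EGT}. Both factors of $\zeta$ agree on the common initial data indexed by $\st_0$, and each intertwines its mutation operation position by position; hence $\zeta$ sends the tagged flip at arc $i$ to the categorical mutation $\mu_i$. Consequently $\zeta$ carries each tagged triangulation to a cluster tilting set and each flip-edge of $\EGT(\surf)$ to a mutation-edge of $\CEG{*}{\surf}$, which is precisely an isomorphism of graphs.

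Finally I would argue that $\zeta$ is \emph{canonical}, that is, independent of the auxiliary triangulation $\st_0$. Since any two tagged triangulations of $\surf$ are joined by a sequence of flips \cite{FST}, a second choice $\st_0'$ is reachable from $\st_0$; the mutation-compatibility just established forces the two composite bijections built from $\st_0$ and from $\st_0'$ to coincide on the initial arcs, and therefore, propagating along the connected exchange graph, everywhere.

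I expect the main obstacle to be the bookkeeping that reconciles the two normalizations: one must verify that the labelling of the initial seed used in \cite{FST} matches the labelling of the summands of $\Gamma$ used in \cite{CKLP}, so that a flip at arc $i$ on the surface is matched with $\mu_i$ in the category rather than with some relabelling. Once this identification of the initial data is secured, the intertwining of mutations is automatic from the two cited results, and the connectivity argument above disposes of the dependence on $\st_0$.
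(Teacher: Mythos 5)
Your proposal is correct and follows essentially the same route as the paper: the authors likewise obtain $\zeta$ by composing the bijection of \cite[Theorem~7.11]{FST} with that of \cite[Corollary~3.5]{CKLP}, noting that both are defined relative to a fixed triangulation and commute with mutations, which is exactly your intertwining and connectivity argument. The only point the paper adds, as a remark rather than as part of the proof, is that restricting to \emph{reachable} cluster tilting sets (or using $\partial\surf\neq\emptyset$) is what guarantees the target graph is connected and the isomorphism \eqref{eq:EGT} makes sense.
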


Note that we are using in this lemma the fact that the surface has non-empty boundary, and/or the definition of the exchange graph given by reachable cluster tilting sets: The mutation graph of all cluster tilting sets has two connected components for the case of a once punctured torus without boundary, and thus is not isomorphic to the (connected) graph $\EGT(\surf)$.

\section{The tagged rotation}
\subsection{Tagged mapping class group}
Consider the group of orientation-preserving  diffeomorphisms of $\surf$ that preserve (not necessarily pointwise)
the set of marked points $\M$, the set of punctures $\P$ and the boundary.
The quotient of this group by the subgroup of diffeomorphisms that are isotopic to the identity
through isotopies that fix the set of marked points pointwise is called the \emph{marked mapping class group} $\PM(\surf)$ of $\surf$.
As in \cite{ASS}, we denote by
\[\TM(\surf) = \PM(\surf)\ltimes(\kong{Z}_2)^p\]
the \emph{tagged mapping class group} of $\surf$
consisting of elements $(\varphi,\delta)$ where
$\varphi\in\PM(\surf)$ and $\delta$ is a $\{\pm1\}$-sign on each point in $\mathbf{P}$,
endowed with  the multiplication defined by
\[
    (\varphi', \delta') \circ (\varphi, \delta)
    =\Big(\varphi'\circ\varphi, (\delta'\circ\varphi)\cdot\delta\Big)
\]
using the action \[ \left((\delta'\circ\varphi)\cdot\delta\right)(P)=\delta'(\varphi(P))\cdot\delta(P).\]
We can identify $\PM(\surf)$ with
\[
    \PM(\surf)\times \mathbf{1}\subset\TM(\surf),
\]
where $\mathbf{1}(P)=1$ for any $P\in\P$.

Denote by $\Aut\C{}{\surf}$ the group of auto-equivalences of $\C{}{\surf}$
and by $\stab\obj$ the subgroup of $\Aut\C{}{\surf}$ that preserves (pointwise) the set $\obj$.
Let $\Aut_0\C{}{\surf}=\Aut\C{}{\surf}/\stab\obj$.

Any element in $\TM(\surf)$ acts on the set $\TA$ and hence on  $\obj$ via
the canonical bijection $\zeta$ in Lemma~\ref{lem:bij}.
The following lemma is an analogue of \cite[Theorem 4.11]{ASS} in terms of the cluster category.
Note however that one cannot directly use the result from \cite{ASS} to define a group homomorphism from $ \TM(\surf)$ to $\Aut_0\hua{C}(\surf)$ since the cluster variables correspond only to (reachable) rigid objects, whereas the cluster category can contain more (in fact infinite families of)  indecomposable objects.

\begin{lemma}\label{lem:MCG}
There is a canonical injection
$\iota_\surf\colon\TM(\surf)\hookrightarrow\Aut_0\C{}{\surf}$.
\end{lemma}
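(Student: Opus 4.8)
The plan is to construct the map $\iota_\surf$ explicitly and then verify that it is a well-defined group homomorphism that is injective. Given $(\varphi,\delta)\in\TM(\surf)$, the action on the set $\TA$ of tagged arcs is already defined geometrically: $\varphi$ permutes tagged arcs since it preserves $\M$, $\P$ and the boundary, while $\delta$ toggles the tagging at the punctures where $\delta(P)=-1$. Transporting this permutation of $\TA$ through the canonical bijection $\zeta$ of Lemma~\ref{lem:bij} yields a permutation of $\obj$. The first task is to upgrade this set-level permutation to an auto-equivalence of the triangulated category $\C{}{\surf}$. Here I would use that a tagged mapping class preserves compatibility of tagged arcs (it is a homeomorphism, so it sends a tagged triangulation to a tagged triangulation and a tagged flip to a tagged flip), hence by Lemma~\ref{lem:bij} it induces an automorphism of the exchange graph $\CEG{*}{\surf}$ that commutes with mutation. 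An automorphism of the exchange graph fixing the combinatorics of mutation can be promoted to an auto-equivalence of $\C{}{\surf}$; this produces a class in $\Aut_0\C{}{\surf}=\Aut\C{}{\surf}/\stab\obj$, and passing to this quotient absorbs the ambiguity in the choice of lift.

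Next I would check that $\iota_\surf$ is a group homomorphism. The multiplication in $\TM(\surf)$ was defined precisely by
\[
    (\varphi',\delta')\circ(\varphi,\delta)=\Big(\varphi'\circ\varphi,\ (\delta'\circ\varphi)\cdot\delta\Big),
\]
and the point is that this is exactly the composition rule one obtains when one first relabels punctures by $\varphi$ and then applies the sign change $\delta'$. Since $\zeta$ intertwines the $\TM(\surf)$-action on $\TA$ with the induced action on $\obj$, and composition of auto-equivalences descends to the quotient $\Aut_0\C{}{\surf}$, the assignment respects composition on the nose. Verifying this is essentially bookkeeping once the semidirect product structure is matched with the geometry of tagging.

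The substantive part is injectivity. Suppose $(\varphi,\delta)$ lies in the kernel, i.e. $\iota_\surf(\varphi,\delta)\in\stab\obj$, so that the induced permutation fixes every object of $\obj$ pointwise. Through $\zeta$ this says $(\varphi,\delta)$ fixes every tagged arc in $\TA$. I would then argue that a tagged mapping class fixing all tagged arcs must be trivial in $\TM(\surf)$. First, fixing all tagged arcs forces $\delta=\mathbf{1}$: if $\delta(P)=-1$ for some puncture $P$, then some tagged arc ending at $P$ would have its tagging flipped and hence be moved, a contradiction. With $\delta$ trivial, $\varphi$ is an orientation-preserving diffeomorphism fixing the isotopy class of every arc, and the standard Alexander-method argument in mapping class group theory then shows $\varphi$ is isotopic to the identity (a homeomorphism fixing the isotopy classes of a filling system of arcs, such as those in any triangulation together with enough additional arcs, is isotopic to the identity rel the marked points). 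Hence $(\varphi,\delta)$ is the identity of $\TM(\surf)$.

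I expect the main obstacle to be the first step rather than the last: promoting the combinatorial graph automorphism to an honest auto-equivalence of $\C{}{\surf}$ and checking it is well defined modulo $\stab\obj$ is where the cluster-categorical input is genuinely needed, since—as the remark before the lemma emphasizes—one cannot simply import the cluster-algebra automorphism statement of \cite{ASS}, because $\C{}{\surf}$ contains non-rigid indecomposables that fall outside $\obj$. The resolution is that auto-equivalences are only required to be compatible on the reachable part $\obj$, which is precisely why the target is $\Aut_0\C{}{\surf}$ and not $\Aut\C{}{\surf}$; this is what makes the construction go through and the quotient by $\stab\obj$ indispensable.
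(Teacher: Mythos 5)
Your overall architecture (act on tagged arcs, transport through $\zeta$, prove injectivity by Alexander's method) matches the paper's, and your injectivity argument is essentially identical to the one given there. But there is a genuine gap at exactly the point you yourself flag as the substantive one: the assertion that ``an automorphism of the exchange graph fixing the combinatorics of mutation can be promoted to an auto-equivalence of $\C{}{\surf}$'' is not a fact you can invoke --- it is the content of the lemma. A permutation of $\obj$ compatible with mutation does not automatically come from a triangle functor; indeed, the converse question (which graph/cluster automorphisms are induced by the mapping class group) is a separate and nontrivial result in this paper (Proposition~\ref{thm:auto}, relying on \cite{BS}). Your proposal never actually produces a functor on $\C{}{\surf}$.

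The paper closes this gap by an explicit construction at the level of dg algebras. Since $\varphi$ carries a triangulation $\T$ to $\T'=\varphi(\T)$, the associated quivers with potential are right equivalent, which induces an isomorphism $\Gamma_\T\to\Gamma_{\T'}$ of Ginzburg dg algebras, hence a derived equivalence $\per\Gamma_\T\to\per\Gamma_{\T'}$ and an equivalence $E_\varphi\colon\C{}{\T}\to\C{}{\T'}$. Separately, one must identify all the categories $\C{}{\T}$ with a single $\C{}{\surf}$: this uses the Keller--Yang equivalences $\Phi_\pm$ attached to each flip, the fact (\cite{K6}) that $\Phi_+^{-1}\circ\Phi_-$ is a spherical twist so both induce the same equivalence $E_f$ on cluster categories, and the compatibility \eqref{eq:zetastable} of the bijections $\zeta_\T$ with $E_f$, which guarantees that the identification $E_\T\colon\C{}{\T}\to\C{}{\surf}$ is independent of the chosen flip sequence modulo $\stab\obj$. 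Only then is $\iota_\surf(\varphi)=E_{\T'}\circ E_\varphi\circ E_\T^{-1}$ a well-defined element of $\Aut_0\C{}{\surf}$. None of this categorical input appears in your proposal, so as written it establishes an injection of $\TM(\surf)$ into the permutation group of $\obj$, not into $\Aut_0\C{}{\surf}$.
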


\begin{proof}
We first recall  how the cluster category $\C{}{\surf}$ is defined.
Given a triangulation $\T$, there is an associated quiver with potential $(Q_\T, W_\T)$
with  corresponding Ginzburg dg algebra $\Gamma_\T$.
Then $\C{}{\T}$ is defined to be the quotient category $\per\Gamma_\T/\D_{fd}(\Gamma_\T)$
and there is a corresponding bijection $\zeta_\T$ in \eqref{eq:zeta}.
For any two triangulations $\T$ and $\T'$ related by a flip $f$, the corresponding quivers with potential are right equivalent by \cite[Theorem 8.1]{LF2}, and
there are two canonical derived equivalences (see, e.g. \cite[Theorem 7.4]{K10})
\[
    \Phi_{\pm}\colon \per\Gamma_\T\cong\per\Gamma_{\T'}.
\]
Moreover, the  composition $\Phi_+^{-1}\circ\Phi_-$ yields a spherical twist
(cf. Definition~\ref{def:twist})
and, by \cite[Theorem 5.6]{K6}, the two functors induce the same
triangulated equivalence
$$E_f\colon\C{}{\T}\cong\C{}{\T'}.$$
Further, the bijections $\zeta_\T$ and $\zeta_{\T'}$ are compatible under this equivalence, i.e.
\begin{gather}\label{eq:zetastable}
 E_f\left( \zeta_\T(\gamma) \right)=\zeta_{\T'}(\gamma).
\end{gather}

Given two triangulations $\T$ and $\T'$ and a sequence $s$ of flips that relates them,
there is an equivalence $E_s$ between $\C{}{\T}$ and $\C{}{\T'}$ defined as the composition of the equivalences $E_f$ where $f$ runs through the flips in the sequence $s$.
For any two such sequences $s$ and $s'$ transforming $\T$ into $\T'$, we have $E^{-1}_{s'}\circ E_s = \rm{id}  \in\Aut_0\hua{C}(\T)$ since they act identically on $\hua{C}^\bowtie(\T)$ by \eqref{eq:zetastable}.
Thus we can identify all the cluster categories $\C{}{\T}$ as $\C{}{\surf}$, and this identification $E_\T : \C{}{\T} \to \C{}{\surf}$ is unique when studying elements in the quotient group   $\Aut_0\C{}{\surf}$.

Now consider an element $\varphi\in\TM(\surf)$ which takes a triangulation $\T$ to $\T'$.
So we have the following canonical identifications as above:
\begin{gather}\label{eq:EE}\xymatrix{
    \C{}{\T}\ar@{--}[rr]^{E_\varphi}\ar[dr]_{E_\T}&&\C{}{\T'}\ar[dl]^{E_{\T'}}\\&\C{}{\surf}.
}\end{gather}
As $\T'=\varphi(\T)$, the associated quivers with potential are right equivalent
(in particular, the associated quivers are isomorphic in a canonical way),
which induces an isomorphism $\Gamma_\T\to\Gamma_\T'$, see \cite[Proposition~3.5]{KY} for such type of isomorphisms.
Consider the corresponding derived equivalence $i_\varphi\colon\per\Gamma_\T\to\per\Gamma_{\T'}$
which induces an equivalence $E_\varphi\colon\C{}{\T}\cong\C{}{\T'}$.
Then $E_\varphi$ maps the canonical cluster tilting set $\mathbf{M}_\T$ in $\C{}{\T}$ to
the canonical cluster tilting set $\mathbf{M}_{\T'}$ in $\C{}{\T'}$.
Note that  $\mathbf{M}_{\T}$ and  $\mathbf{M}_{\T'}$ are in fact the images of $\Gamma_\T$ and $\Gamma_{\T'}$ under the projection
from the perfect derived category to the cluster category.
In general these two cluster tilting sets do not map under the canonical identifications $E_\T$ and $E_{\T'}$ to the same cluster tilting set in $\C{}{\surf}$
in \eqref{eq:EE}.

Moreover, since $E_\varphi$ preserves the canonical cluster tilting sets
 it also preserves the set of reachable rigid objects.
Define $\iota_\surf(\varphi)$ to be the auto-equivalence
$$ E_{\T'}  \circ E_\varphi\circ  E_\T^{-1}  ,$$
which is well-defined as an element in $\Aut_0\C{}{\surf}$.

The injectivity of $\iota_\surf $ follows from the well-known Alexander's method from topology.
More precisely, the element in $\TM(\surf)$ that preserves a (tagged) triangulation
must be the identity.
\end{proof}

The fixed orientation $\hua{O}$ of $\surf$ induces
an orientation $\hua{O}$ on each component of $\partial\surf$ such that the surface $\surf$ lies to the left of every component.
This allows to define a rotation $\rho$  on the set of marked points $\mathbf{M}$ in $\partial\surf$
which sends each marked point $M$ to
the next marked point  $M'$ in $\partial\surf$ along $\hua{O}$ (cf. Figure~\ref{fig:rotate}).
We would like to extend this rotation to an element in $\PM(\surf)$:

\begin{figure}[b]\centering
\begin{tikzpicture}[scale=.4]
\draw[thick](0,0) circle (5) (-135:4)node{$+$};
\draw[thick,fill=gray!20](0,0) circle (1.5) (126:5.9)node{$\rho$}(126:.5)node{$\rho$};
  \foreach \j in {1,...,5}{\draw(90-72*\j:5) node{$\bullet$};}
  \foreach \j in {1,...,3}{\draw(-90-120*\j:1.5) node{$\bullet$};}

\draw[blue,->,>=latex](90+36-15:5.4)arc(90+36-15:90+36+15:5.4);
\draw[blue,->,>=latex](90+36+25:1.1)arc(90+36+25:90+36-45:1.1);
\end{tikzpicture}
\caption{The rotation on $\surf$}
\label{fig:rotate}
\end{figure}

\begin{example}\label{ex:rotation}
For each component $Y$ in the boundary $\partial\surf$ with $m_Y=|\M \cap Y|$ marked points
there is a rotation $\rho_Y$ in $\PM(\surf)\subset\TM(\surf)$ which sends each marked point $M$ in $Y$ to
the next marked point  in $Y$.
The $m_Y-$th power $\rho_Y^{m_Y}$ of the rotation $\rho_Y$ coincides with
the (positive) Dehn twist $\Dehn{Y}$ along $Y$ (cf. Figure~\ref{fig:The Dehn twist} and see \cite{FM} for a definition of Dehn twists).
As an element in the mapping class group $\PM(\surf)$, $\rho_Y$ acts on the set of arcs of $\surf$ and thus induces a permutation on $\TA$
where the tagging of $\rho_Y(\alpha)$ at any puncture equals
the tagging of the arc $\alpha$ at that puncture.
\end{example}

\begin{figure}[t]\centering
\begin{tikzpicture}[xscale=.3,yscale=.3]
  \draw[thick](0,0)circle(7);
  \draw[thick, fill=gray!45](0,0)circle(1);
  \draw[\arrow, very thick](0,0)circle(4);
  \draw(60:4.8)node{$C$}  (-120:5.5)node{+};
  \draw[smooth,domain=1:7,blue,very thick,dashed] plot (90:\x);
  \draw[smooth,domain=1:7,cyan,very thick,dashed] plot (180+90:\x);
  \draw(0:7.5)edge[very thick,->,>=latex](0:11);\draw(0:9)node[above]{$\Dehn{C}$};
\end{tikzpicture}
\begin{tikzpicture}[xscale=.3,yscale=.3]
  \draw[thick](0,0)circle(7);
  \draw[thick, fill=gray!45](0,0)circle(1);
  \draw[\arrow, very thick](0,0)circle(4);
  \draw[smooth,domain=1:7,blue,very thick,dashed] plot (150-\x*60:\x);
  \draw[smooth,domain=1:7,cyan,very thick,dashed] plot (-30-\x*60:\x);
\end{tikzpicture}
\caption{The Dehn twist}
\label{fig:The Dehn twist}
\end{figure}
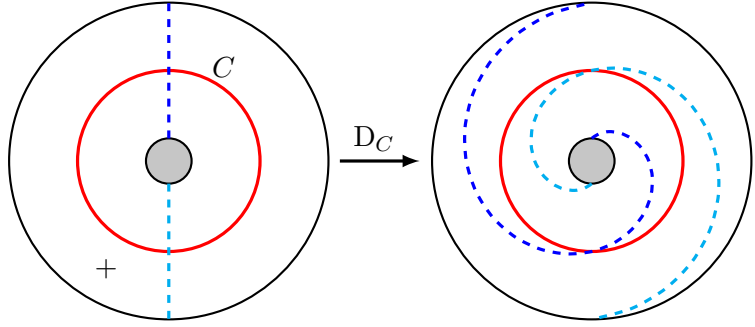

\begin{example}\label{ex:rotation2}
For each puncture $P\in\P$, there is a \emph{tagging switch} $\delta^P=(\id, \delta_P)$
in $\TM(\surf)$, such that $\delta_P(P')=-1$ if and only if $P'=P$.

Further, $\delta^P$ induces a permutation on the set of tagged arcs $\TA$
that preserves the underlying ordinary arcs but changes the tagging
at the puncture $P$.
\end{example}

\begin{definition}\label{def:T-rotate}
The \emph{(universal) tagged rotation} $\varrho$ is the permutation on $\TA$
induced by the element
\begin{gather}\label{eq:T-rotation}
    \varrho=\prod_{Y\subset\partial\surf} \rho_Y \cdot \prod_{P\in\P} \delta^P
\end{gather}
in $\TM(\surf)$ where the first product is over all connected components $Y$ of $\partial\surf$. Note that the order in \eqref{eq:T-rotation} does not matter since the $\rho_Y$ commute.
\end{definition}

An instance of a tagged rotation is shown in Figure~\ref{fig:T-rotate}.

\begin{figure}[t]\centering
\begin{tikzpicture}[scale=.7]
\draw[\dexc,thick](0,4)to(1.5,2.5)to(2.5,1.5);
\draw[\dexc,thick] (0,4)to(0,-1.5);
\draw[thick](0,0) circle (4)(1.5,2.5)node{$\bullet$}(2.5,1.5) node{$\bullet$};
\draw[thick,fill=gray!20](0,-2) circle (.5) (0,-1.5) node{$\bullet$}
    (1.3,2.7) node[\dexc]{$+$};
  \foreach \j in {1,...,4}{\draw(90-90*\j:4) node{$\bullet$};}
\end{tikzpicture}
\begin{tikzpicture}[scale=.7]
\draw[\dexc,thick](-4,0)to(1.5,2.5)to(2.5,1.5);
\draw[\dexc,thick] (-4,0).. controls +(0:3) and +(75:1.5) ..(1.5,-2)
    .. controls +(-105:2.5) and +(190:3) ..(0,-1.5);
\draw[thick](1.7,2.3) node[\dexc]{$+$}  (2.3,1.7) node[\dexc]{$+$};;
\draw(-5.5,0)node{$\Longrightarrow$}node[above]{$\rho$};
\draw[thick](0,0) circle (4);
\draw[thick](0,0) circle (4)(1.5,2.5)node{$\bullet$}(2.5,1.5) node{$\bullet$};
\draw[thick,fill=gray!20](0,-2) circle (.5)(0,-1.5) node{$\bullet$};
  \foreach \j in {1,...,4}{\draw(90-90*\j:4) node{$\bullet$};}
\end{tikzpicture}
\caption{The tagged rotation on arcs in $\surf$}
\label{fig:T-rotate}
\end{figure}

\subsection{AR-translation on marked surfaces}
This subsection is devoted to show that the tagged rotation corresponds to the shift
in the cluster category via the bijection $\zeta$ in Lemma~\ref{lem:bij}.

\begin{lemma}\label{lem:T-rotation}
For a tagged arc $\alpha\in\TA$
whose end points are distinct and at least one of which is not a puncture,
we have $\zeta(\varrho(\alpha))=\zeta(\alpha)[1]$.
\end{lemma}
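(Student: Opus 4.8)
The plan is to compute the object $\zeta(\alpha)[1]$ inside a single, carefully chosen triangulation in which the shift is realized by one (tagged) flip, and then to match that flip with the geometric tagged rotation $\varrho(\alpha)$. Since the bijection $\zeta$ and the graph isomorphism $\EGT(\surf)\cong\CEG{*}{\surf}$ of Lemma~\ref{lem:bij} intertwine tagged flips with mutations, it suffices to exhibit a triangulation $\st$ containing $\alpha$ together with a flip whose two outputs can be identified on the two sides: geometrically the flipped arc is $\varrho(\alpha)$, and categorically the mutated object is $\zeta(\alpha)[1]$.

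First I would treat a boundary endpoint. Suppose $\alpha$ ends at a marked point $P_1$ on a boundary component $Y$, with $P_2$ the next marked point along the induced orientation. I would choose $\st$ so that $\alpha=P_1P_3$ is the diagonal of a quadrilateral $P_1P_2P_3P_4$ whose sides $P_1P_2$ and $P_3P_4$ are boundary segments (the next-point edges at each end). Because boundary segments correspond to the zero object, the exchange triangle \eqref{eq:mutate1}--\eqref{eq:mutate2} attached to the flip of $\alpha$ collapses to
$$\zeta(\alpha)\to 0\to \zeta(\alpha')\to \zeta(\alpha)[1],$$
so that $\zeta(\alpha')\cong\zeta(\alpha)[1]$. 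The flipped diagonal is $\alpha'=P_2P_4$, which is precisely $\alpha$ with each boundary endpoint advanced to the next marked point, i.e. $\alpha'=\rho_Y(\alpha)$ (simultaneously at both ends when both lie on the boundary). This realizes the boundary part of $\varrho$.

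Next I would incorporate a puncture endpoint. When one endpoint of $\alpha$ is a puncture $P$ (the hypothesis guarantees the other is not, and that the two endpoints are distinct, so the local models above and below do not degenerate), I would use the tagged flip at the self-folded configuration of Figure~\ref{fig:self-folded}: in the tagged setting a flip of an arc incident to $P$ switches the tagging at $P$. I would arrange $\st$ so that the single flip of $\alpha$ simultaneously advances the boundary endpoint to the next marked point and performs the tagging switch $\delta^P$, thereby realizing $\rho_Y\cdot\delta^P$ applied to $\alpha$, while the same collapsed exchange triangle again identifies the mutated object with $\zeta(\alpha)[1]$.

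The hard part will be twofold. The combinatorial obstacle is the existence, for an arbitrary such $\alpha$, of a triangulation exhibiting the clean local configuration at \emph{both} ends at once (the boundary quadrilateral at each boundary endpoint and the self-folded/tagged model at a puncture endpoint); this is exactly the assertion that one can always choose a triangulation in which the shift is a single flip. The categorical obstacle is to justify rigorously that the collapsed exchange triangle computes the suspension, i.e. that the mutated object really is $\zeta(\alpha)[1]$ and not merely an exchange partner; I expect to verify this through the canonical derived equivalences $\Phi_{\pm}$ between the Ginzburg algebras of $\st$ and its flip, using the defining exact sequence \eqref{eq:ses}, precisely as in the construction of $\iota_\surf$ in Lemma~\ref{lem:MCG}.
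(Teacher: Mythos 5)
Your proposal is correct and follows essentially the same route as the paper: the paper likewise picks a triangulation in which the two edges that would receive irreducible maps from $\zeta(\alpha)$ are boundary segments (Case I: the quadrilateral with two boundary sides; Case II: the configuration around the puncture with the enclosing arc, where the tagged flip performs $\rho_Y\cdot\delta^P$), so that the middle term of the exchange triangle \eqref{eq:mutate1} vanishes and the mutation outputs $\zeta(\alpha)[1]$, while the geometric flip outputs $\varrho(\alpha)$. Your worry about rigor in identifying the mutated object with the suspension is settled directly by \eqref{eq:mutate1} (no detour through $\Phi_{\pm}$ is needed), which is exactly how the paper argues.
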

\begin{proof}
By formula \eqref{eq:mutate1}, for any cluster tilting set $\hua{P}=\{P_j\}_{j=1}^n$,
an object $P_i$ in $\hua{P}$ becomes $P_i[1]$ in $\mu_i(\hua{P})$ if
\begin{gather}\label{eq:irr}
  \Irr(P_i,P_j)=0, \quad \forall j.
\end{gather}
Since the dimensions of the spaces of irreducible morphisms between the objects $P_k$ correspond to the numbers of arrows between vertices in the Gabriel quiver $Q_\hua{P}$ of $\End(\oplus_{k=1}^n P_k)^{\rm op}$
condition \eqref{eq:irr} is equivalent to $P_i$ being a source in $Q_\hua{P}$.
Thus, via the correspondence $\zeta$ in Lemma~\ref{lem:bij},
we only need to show that there exists
a tagged triangulation $\st$ containing $\alpha$
such that $\alpha$ is a sink in the corresponding associated quiver
and $\varrho(\alpha)$ is the tagged arc obtained by the tagged flip of $\st$ at $\alpha$.

Without loss of generality, suppose that $\alpha$ is not tagged at either end.
We consider two cases:
\medskip

\begin{itemize}
\item[\textbf{I.}]
  If $\alpha=\wideparen{AC}$ is an arc whose endpoints $A,C$ both lie in $M$, we denote $B=\varrho(A)$ and $D=\varrho(C)$.
  The arc $\varrho(\alpha)$ is then the arc connecting $B$ and $D$
that can be obtained as a contraction of the union
\[
    \wideparen{BA}\cup\wideparen{AC}\cup\wideparen{CD},
\]
where $\wideparen{BA}$ and $\wideparen{CD}$ are boundary arc segments, see Figure~\ref{fig:case1}.
  We choose a triangulation $\st$ containing the triangles $\tri ABC$ and $\tri ACD$
  as shown in the left picture of Figure~\ref{fig:case1}.
  \bigskip

\item[\textbf{II.}]
  If $\alpha=\wideparen{AP}$ such that $A\in\M$ and $P\in\P$, we denote $B=\varrho(A)$.
  Choose a triangulation $\st$ containing $\alpha$ and the arc $\wideparen{AB}$ that
  only encloses the puncture $P$.
  Moreover, $\st$ can either contain the ordinary arc $\wideparen{BP}$
  or the tagged arc $\wideparen{AP}$,
  as shown in the middle and right pictures of Figure~\ref{fig:case1}.
\end{itemize}
\medskip

In both cases it is straightforward to see that
$\alpha$ corresponds to a source in the quiver $Q_\hua{P}$ and
the tagged rotation $\varrho(\alpha)$ of $\alpha$ is indeed the tagged arc obtained by the tagged flip of $\st$ at $\alpha$, as required.

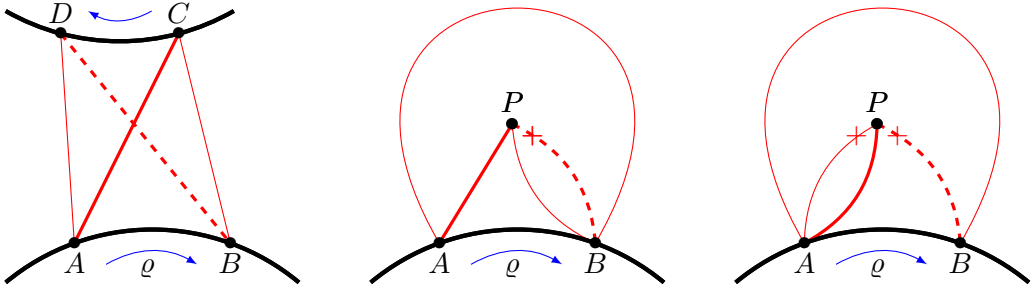
\begin{figure}[t]\centering
\begin{tikzpicture}[scale=.6]
\draw(-1,0)node{};
\draw[ultra thick](0,0)arc(130:110:5)coordinate(C)arc(110:70:5)coordinate(D)arc(70:50:5);
\draw[ultra thick](0,6)arc(-120:-105:5)coordinate(B)arc(-105:-75:5)coordinate(A)arc(-75:-60:5);
\draw[blue,->,>=latex](2.2,.4)to[bend left](4.2,.4);
\draw[blue,<-,>=latex](1.8,6)to[bend right](3.2,6);
\draw(3.1,.3)node{$\varrho$};
\draw[thin,red](D)to(A)to(C)to(B)edge[dashed,very thick](D);\draw[very thick,red](A)to(C);
\draw[ultra thick](0,0)arc(130:110:5)coordinate(C)arc(110:70:5)coordinate(D)arc(70:50:5);
\draw[ultra thick](0,6)arc(-120:-105:5)coordinate(B)arc(-105:-75:5)coordinate(A)arc(-75:-60:5);
\draw(A)node{$\bullet$}node[above]{$C$};\draw(B)node{$\bullet$}node[above]{$D$};
\draw(C)node{$\bullet$}node[below]{$A$};\draw(D)node{$\bullet$}node[below]{$B$};

\draw[ultra thick](0+8,0)arc(130:110:5)coordinate(C)arc(110:70:5)coordinate(D)arc(70:50:5)
    (3.1+8,3.5)coordinate(P)node{$\bullet$}node[above]{$P$}(3.55+8,3.25)node[red]{$+$};
\draw[blue,->,>=latex](2.2+8,.4)to[bend left](4.2+8,.4);
\draw(3.1+8,.3)node{$\varrho$};
\draw[very thick,red](P)to(C);
\draw[thin,red](C)to(P)  (D)   .. controls +(60:8) and +(120:8) ..(C);
\draw[thin,red,bend left] (D)to(P)edge[dashed,very thick](D);
\draw[ultra thick](0+8,0)arc(130:110:5)coordinate(C)arc(110:70:5)coordinate(D)arc(70:50:5)
    (3.1+8,3.5)coordinate(P)node{$\bullet$}node[above]{$P$}(3.55+8,3.25)node[red]{$+$};
\draw(C)node{$\bullet$}node[below]{$A$};\draw(D)node{$\bullet$}node[below]{$B$};

\draw[ultra thick](0+16,0)arc(130:110:5)coordinate(C)arc(110:70:5)coordinate(D)arc(70:50:5)
    (3.1+16,3.5)coordinate(P)node{$\bullet$}node[above]{$P$}
        (3.55+16,3.25)node[red]{$+$}(2.65+16,3.25)node[red]{$+$};
\draw[blue,->,>=latex](2.2+16,.4)to[bend left](4.2+16,.4);
\draw(3.1+16,.3)node{$\varrho$};
\draw[thin,red](D)   .. controls +(60:8) and +(120:8) ..(C);
\draw[thin,red,bend left] (C)to(P)edge[very thick](C);
\draw[thin,red,bend left] (P)edge[dashed,very thick](D);
\draw[ultra thick](0+16,0)arc(130:110:5)coordinate(C)arc(110:70:5)coordinate(D)arc(70:50:5)
    (3.1+16,3.5)coordinate(P)node{$\bullet$}node[above]{$P$};
\draw(C)node{$\bullet$}node[below]{$A$};\draw(D)node{$\bullet$}node[below]{$B$};
\end{tikzpicture}
\caption{The flip/tagged rotation of certain types of arcs}
\label{fig:case1}
\end{figure}
\end{proof}

We consider now a loop $\alpha$ whose endpoint $A$ is a marked point in $M$, and such that $\alpha$ is not contractible in the surface $\surf$, that is, $\alpha$ is non-trivial in the homology group even if we
glue a disc to each boundary component and ignore all the marked points. This  implies that $\alpha$ is a non-separating loop with endpoint in $\M$.
In this case the surface has genus at least one, and we can realize the arc $\alpha$ as the arc labeled by $1$ in the upper left picture in Figure~\ref{fig:mutation}. The blue triangle depicts the rest of the surface, in particular,  the blue region might contain
punctures, boundary components and handle bodies.
It is also possible that the arcs labeled $4$ and $5$ coincide, in which case
$\surf$ is just a torus with one boundary component and one marked point.
The rotation $\varrho$ yields the next point $\varrho(A)=B$ in $\M$ on the same boundary component. We do not insist that $A$ and $B$ are different points, as for in the  case of a torus with one boundary component and one marked point the points $A$ and $B$ coincide.

\begin{lemma}\label{lem:T-rotation2}
Let $\surf$ be a surface of genus at least one, and let $\alpha$ be a loop with endpoint in $\M$ such that $\alpha$ is not contractible in the surface $\surf$.
Then we have $\zeta(\varrho(\alpha))=\zeta(\alpha)[1]$.
\end{lemma}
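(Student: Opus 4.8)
The plan is to reduce, as in the proof of Lemma~\ref{lem:T-rotation}, to the single mechanism provided by \eqref{eq:mutate1}: an object $\zeta(\alpha)$ is sent to its shift $\zeta(\alpha)[1]$ by the mutation at $\alpha$ precisely when $\alpha$ is a source of the associated quiver, i.e. when condition \eqref{eq:irr} holds. Thus it suffices to produce a tagged triangulation $\st$ containing $\alpha$ in which $\alpha$ is a source and whose flip at $\alpha$ is the rotated loop $\varrho(\alpha)$. The difference with Lemma~\ref{lem:T-rotation} is that the two ends of $\alpha$ are the same marked point $A$ and rotate to the same point $B=\varrho(A)$, so the relevant local model is no longer a disc but the handle region drawn in Figure~\ref{fig:mutation}, where $\alpha$ is the arc labelled $1$.

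First I would set up this local triangulation: complete $\alpha$ to a triangulation by the handle arcs $2,3,4,5$ (with $4$ and $5$ coinciding in the torus case) together with an arbitrary triangulation of the blue region representing the rest of $\surf$. Since the signed adjacency matrix, and hence the arrows of the quiver incident to $\alpha$, only depend on the triangles containing $\alpha$, the arrows at $\alpha$ are determined entirely by the handle. This lets me discard the blue region for the purpose of checking \eqref{eq:irr} and reduce the quiver computation to the minimal surface, a torus with one boundary component and a single marked point.

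In this local model I would then run the short sequence of flips indicated in Figure~\ref{fig:mutation} (which can be taken to avoid $\alpha$ itself), so that by the compatibility \eqref{eq:zetastable} of the bijections along a flip sequence the object $\zeta(\alpha)$ is unchanged. The sequence is chosen so that in the resulting triangulation $\alpha$ has become a source and its flip is exactly $\varrho(\alpha)$; reading off the quiver after these flips and applying \eqref{eq:mutate1} then yields $\zeta(\varrho(\alpha))=\zeta(\alpha)[1]$, as claimed.

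The main obstacle is precisely this last local computation. Because $\alpha$ is a loop, in any triangulation it generically carries arrows in both directions, so---unlike in Lemma~\ref{lem:T-rotation}---no single disc-type picture makes it a source while simultaneously flipping to the rotation. The real content is therefore to exhibit the correct handle triangulation and the flip sequence of Figure~\ref{fig:mutation}, and to verify both that $\alpha$ becomes a source (so that $\Irr(\zeta(\alpha),-)$ vanishes) and that the resulting flip is the rotated loop rather than some other completing arc. Keeping track of the two possible sides of the handle, and of the degenerate case where arcs $4$ and $5$ coincide, is where the bookkeeping is most delicate.
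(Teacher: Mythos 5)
There is a genuine gap here, and it is not a matter of delicate bookkeeping: the reduction to the source mechanism of Lemma~\ref{lem:T-rotation} is impossible for a non-contractible loop, which is exactly why the paper abandons that mechanism in this case. Your plan requires a triangulation $\st$ containing $\alpha$ whose flip at $\alpha$ is $\varrho(\alpha)$. But the flip of $\alpha$ is the second diagonal of the quadrilateral formed by the two triangles of $\st$ adjacent to $\alpha$, so it meets $\alpha$ in exactly one interior point. By contrast, both ends of the loop $\alpha$ are anchored at $A$, and the rotation drags each end along the boundary to $B=\varrho(A)$ across the other end, so $\varrho(\alpha)$ meets $\alpha$ in two interior points; these two crossings cannot be removed, since bigons cancel crossings in pairs and zero crossings is excluded because $\Hom(\zeta(\alpha),\zeta(\alpha)[2])\cong D\End(\zeta(\alpha))\neq 0$ by the $2$-Calabi--Yau property. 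Equivalently, $\dim\Ext^1(\zeta(\alpha),\zeta(\alpha)[1])=\dim\End(\zeta(\alpha))=2$, whereas exchange pairs have one-dimensional $\Ext^1$. Hence $(\alpha,\varrho(\alpha))$ is never an exchange pair, no sequence of flips avoiding $\alpha$ can make $\alpha$ a source whose flip is $\varrho(\alpha)$, and condition \eqref{eq:irr} never holds at $\alpha$. You have also misread Figure~\ref{fig:mutation}: the sequence there is $\mu_3\circ\mu_2\circ\mu_1$, whose \emph{first} step is the mutation at $\alpha$ itself (arc $1$), so $\zeta(\alpha)$ is not preserved along the way and \eqref{eq:zetastable} cannot be invoked as you propose.

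The actual argument must therefore follow $\zeta(\alpha)$ through a mutation at $\alpha$. The paper mutates at $1$, then $2$, then $3$; the first exchange triangle $X_1\to X_2\to X_7\to X_1[1]$ has the non-trivial middle term $X_2$ (confirming that $\alpha$ is not a source in the first quiver of Figure~\ref{fig:quiver}), and the role of the next two mutations is to cancel this $X_2$ and then $X_3$: assembling the first two exchange triangles into a commutative diagram and applying the octahedral axiom yields a triangle $X_3\to X_8\to X_1[1]\to X_3[1]$, which is identified with the third exchange triangle $X_3\to X_8\to X_9\to X_3[1]$, whence $X_9=X_1[1]$; since arc $9$ is $\varrho(\alpha)$, the lemma follows. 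None of this triangulated-category input appears in your proposal, and the step you defer as ``the real content'' is precisely the step that cannot be carried out.
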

\begin{proof}
We choose a triangulation of $\surf$ containing the arc $\alpha$ as the arc labeled $1$ in Figure~\ref{fig:mutation}, with a presentation of one handle body given by the arcs labeled $2$ and $3$. The arc  $\varrho(\alpha)$\ is then the arc labeled with $9$.
In order to show the statement of the lemma, let $X_i$ denote the rigid object  in the cluster category which is obtained under the map $\zeta$
from the arc labeled by $i$.
We have a mutation sequence $\mu_{321}=\mu_3\circ\mu_2\circ\mu_1$ as shown in Figure~\ref{fig:mutation},
starting from the upper left picture to the lower left one,
then the middle one and finally the right one.
The mutation sequence of the corresponding quivers is shown in Figure~\ref{fig:quiver}.
The corresponding exchange triangles of those three mutations are
\begin{gather*}
  X_1 \xrightarrow{} X_2 \xrightarrow{} X_7 \xrightarrow{} X_1[1],\\
  X_2 \xrightarrow{} X_3\oplus X_7 \xrightarrow{} X_8 \xrightarrow{} X_2[1],\\
  X_3 \xrightarrow{} X_8 \xrightarrow{} X_9 \xrightarrow{} X_3[1].
\end{gather*}
The first two triangles fit into the following diagram of triangles
\begin{equation}
\xymatrix@C=2.5pc@R=2.5pc{
  & X_2 \ar@{=}[r]\ar[d] & X_2\ar[d]\\
  X_3\ar@{=}[d] \ar[r] & X_3\oplus X_7\ar[d] \ar[r] & X_7\ar[d] \ar[r] & X_3[1]\ar@{=}[d]\\
  X_3 \ar[r] & X_8\ar[d] \ar@{.>}[r] & X_1[1]\ar[d] \ar@{.>}[r] & X_3[1] \\
  & X_2[1] \ar@{=}[r] & X_2[1]
}\label{eq:comm.diag}\end{equation}
It is straightforward to see that the leftmost and the top square are commutative.
For instance, the map from $X_2$ to $X_7$ in the top square is
the irreducible map corresponding to the arrow $7\to2$
in the second quiver of Figure~\ref{fig:quiver}.
By the Octahedral Axiom, the third line (with dotted arrows) in \eqref{eq:comm.diag} is a triangle,
which coincides with the third exchange relation.
In other words, $X_9=X_1[1]$, as was to show.
\end{proof}

\begin{figure}[h]\centering
  \begin{tikzpicture}[scale=.7]
  \draw (0-0.4,6)    node{$_A$};
  \draw (4.4,10.0)    node{$_A$};
  \draw (1.5,8.3)    node{$_B$};
  \draw (0-.3,8)    node{$_3$};
  \draw (4+.3,8)    node{$_3$};
  \draw (2,6-.3)    node{$_2$};
  \draw (2,10+.3)    node{$_2$};
  \draw (3-.7,9-.1)    node[blue]{$_4$};
  \draw (3-.5,7+.5)    node[blue]{$_5$};
  \draw (1+.4,3+.1)    node{$_7$};
  \draw (8+.4,5+.5)    node{$_8$};
  \draw (16.5,7)    node{$_9$};
  \draw (1+.2,9-.2)    node{$_1$};
    \draw(0,0+6)coordinate(a1) (0,4+6)coordinate(a2) (4,0+6)coordinate(a3) (4,4+6)coordinate(a4)
        (2-.2,2+6+.2)coordinate(a5);
        \draw[cyan!15,fill=cyan!15](a1)to(a5)to(a4)to[bend left=10](a1);
    \draw[very thick](a1)to(a5);
    \draw[red,thin](a1)to(a2)to(a4)to(a3)to(a1)to[bend right=10](a4)to(a5);
    \foreach \j in {1,...,5}  {\draw[red,thin] (a\j)node{$\bullet$};}
    \path (a2)--(a1) coordinate[pos=.5] (m1);\path (a2)--(a4) coordinate[pos=.55] (m2);
    \path (a3)--(a1) coordinate[pos=.45] (m4);\path (a3)--(a4) coordinate[pos=.5] (m3);
    \draw[red,thin,dotted](a2)to(a1)edge[->,>=latex](m1);
    \draw[red,thin,dotted](a4)to(a3)edge[->,>=latex](m3);
    \draw[red,thin,dotted](a4)to(a2)edge[->>,>=latex](m2);
    \draw[red,thin,dotted](a3)to(a1)edge[->>,>=latex](m4);
\draw[red,ultra thick](a1)to[bend left=30](a4);

    \draw(0,0)coordinate(a1) (0,4)coordinate(a2) (4,0)coordinate(a3) (4,4)coordinate(a4)
        (2-.2,2+.2)coordinate(a5);
        \draw[cyan!15,fill=cyan!15](a1)to(a5)to(a4)to[bend left=10](a1);
    \draw[very thick](a1)to(a5);
    \draw[red,thin](a1)to(a2)to(a4)to(a3)to(a1)to[bend right=10](a4)to(a5);
    \foreach \j in {1,...,5}  {\draw[red,thin] (a\j)node{$\bullet$};}
    \path (a2)--(a1) coordinate[pos=.5] (m1);\path (a2)--(a4) coordinate[pos=.55] (m2);
    \path (a3)--(a1) coordinate[pos=.45] (m4);\path (a3)--(a4) coordinate[pos=.5] (m3);
    \draw[red,thin,dotted](a2)to(a1)edge[->,>=latex](m1);
    \draw[red,thin,dotted](a4)to(a3)edge[->,>=latex](m3);
    \draw[red,thin,dotted](a4)to(a2)edge[->>,>=latex](m2);
    \draw[red,thin,dotted](a3)to(a1)edge[->>,>=latex](m4);
\draw[ultra thick, red](a5)to(a2);

    \draw(6,1)coordinate(a1) (6,5)coordinate(a2) (10,1)coordinate(a3) (10,5)coordinate(a4)
        (8-.2,3+.2)coordinate(a5);
        \draw[cyan!15,fill=cyan!15](a1)to(a5)to(a4)to[bend left=10](a1);
    \draw[very thick](a1)to(a5);
    \draw[red,thin](a1)to(a2) (a4)to(a3) (a1)to[bend right=10](a4)to(a5);
    \foreach \j in {1,...,5}  {\draw[red,thin] (a\j)node{$\bullet$};}
    \path (a2)--(a1) coordinate[pos=.5] (m1);\path (a2)--(a4) coordinate[pos=.55] (m2);
    \path (a3)--(a1) coordinate[pos=.45] (m4);\path (a3)--(a4) coordinate[pos=.5] (m3);
    \draw[red,thin,,dotted](a2)to(a1)edge[->,>=latex](m1);
    \draw[red,thin,,dotted](a4)to(a3)edge[->,>=latex](m3);
    \draw[Emerald,thin,dotted](a4)to(a2)edge[->>,>=latex](m2);
    \draw[Emerald,thin,dotted](a3)to(a1)edge[->>,>=latex](m4);
\draw[ultra thick, red](10,9)to(a5);\draw[red,thin](a5)to(a2);
    \draw(6,1+4)coordinate(a1) (6,5+4)coordinate(a2) (10,1+4)coordinate(a3) (10,5+4)coordinate(a4)
        (8-.2,3+4+.2)coordinate(a5);
        \draw[cyan!15,fill=cyan!15](a1)to(a5)to(a4)to[bend left=10](a1);
    \draw[very thick](a1)to(a5);
    \draw[red,thin](a1)to(a2) (a4)to(a3) (a1)to[bend right=10](a4)to(a5);
    \foreach \j in {1,...,5}  {\draw[red,thin] (a\j)node{$\bullet$};}
    \path (a2)--(a1) coordinate[pos=.5] (m1);\path (a2)--(a4) coordinate[pos=.55] (m2);
    \path (a3)--(a1) coordinate[pos=.45] (m4);\path (a3)--(a4) coordinate[pos=.5] (m3);
    \draw[red,thin,,dotted](a2)to(a1)edge[->,>=latex](m1);
    \draw[red,thin,,dotted](a4)to(a3)edge[->,>=latex](m3);
    \draw[Emerald,thin,dotted](a4)to(a2)edge[->>,>=latex](m2);
    \draw[Emerald,thin,dotted](a3)to(a1)edge[->>,>=latex](m4);
\draw[red,thin](8.666667,9)to(a5); \draw[red,thin](a5)to(a2);
\draw[red,thin](8.666667,1)to(10,5);


    \draw(12,1)coordinate(a1) (12,5)coordinate(a2) (16,1)coordinate(a3) (16,5)coordinate(a4)
        (14-.2,3+.2)coordinate(a5);
        \draw[cyan!15,fill=cyan!15](a1)to(a5)to(a4)to[bend left=10](a1);
    \draw[very thick](a1)to(a5);
    \foreach \j in {1,...,5}  {\draw[red,thin] (a\j)node{$\bullet$};}
    \path (a2)--(a1) coordinate[pos=.5] (m1);\path (a2)--(a4) coordinate[pos=.55] (m2);
    \path (a3)--(a1) coordinate[pos=.45] (m4);\path (a3)--(a4) coordinate[pos=.5] (m3);
    \draw[Emerald,thin,dotted](a1)edge[->,>=latex](m1);
    \draw[Emerald,thin,dotted](a3)edge[->,>=latex](m3);
    \draw[Emerald,thin,dotted](a2)edge[->>,>=latex](m2);
    \draw[Emerald,thin,dotted](a1)edge[->>,>=latex](m4);
    \draw[Emerald,thin,dotted](a2)to(m1)(a4)to(m3)(a4)to(m2)(a3)to(m4);
\draw[red,thin](16,9)to(a5)to(a2)  (a1)to[bend right=10](a4)to(a5);
    \draw(12,1+4)coordinate(a1) (12,5+4)coordinate(a2) (16,1+4)coordinate(a3) (16,5+4)coordinate(a4)
        (14-.2,3+4+.2)coordinate(a5);
        \draw[cyan!15,fill=cyan!15](a1)to(a5)to(a4)to[bend left=10](a1);
    \draw[very thick](a1)to(a5);
    \foreach \j in {1,...,5}  {\draw[red,thin] (a\j)node{$\bullet$};}
    \path (a2)--(a1) coordinate[pos=.5] (m1);\path (a2)--(a4) coordinate[pos=.55] (m2);
    \path (a3)--(a1) coordinate[pos=.45] (m4);\path (a3)--(a4) coordinate[pos=.5] (m3);
    \draw[Emerald,thin,dotted](a1)edge[->,>=latex](m1);
    \draw[Emerald,thin,dotted](a2)edge[->>,>=latex](m2);
    \draw[Emerald,thin,dotted](a2)to(m1)(a4)to(m2);
\draw[red,thin](14.666667,9)to(a5)to(a2);
\draw[red,thin](14.666667,1)to(16,5)  (a1)to[bend right=10](a4)to(a5);

    \draw(12+4,1)coordinate(a1) (12+4,5)coordinate(a2) (16+4,1)coordinate(a3)
        (16+4,5)coordinate(a4)  (14+4-.2,3+.2)coordinate(a5);
        \draw[cyan!15,fill=cyan!15](a1)to(a5)to(a4)to[bend left=10](a1);
    \draw[very thick](a1)to(a5);
    \foreach \j in {1,...,5}  {\draw[red,thin] (a\j)node{$\bullet$};}
    \path (a2)--(a1) coordinate[pos=.5] (m1);\path (a2)--(a4) coordinate[pos=.55] (m2);
    \path (a3)--(a1) coordinate[pos=.45] (m4);\path (a3)--(a4) coordinate[pos=.5] (m3);
    \draw[Emerald,thin,dotted](a3)edge[->,>=latex](m3);
    \draw[Emerald,thin,dotted](a1)edge[->>,>=latex](m4);
    \draw[Emerald,thin,dotted](a4)to(m3)(a3)to(m4);
\draw[red,thin](16+4,9)to(a5)to(a2)  (a1)to[bend right=10](a4)to(a5);
    \draw(12+4,1+4)coordinate(a1) (12+4,5+4)coordinate(a2)
        (16+4,1+4)coordinate(a3) (16+4,5+4)coordinate(a4)
        (14+4-.2,3+4+.2)coordinate(a5);
        \draw[cyan!15,fill=cyan!15](a1)to(a5)to(a4)to[bend left=10](a1);
    \draw[very thick](a1)to(a5);
    \foreach \j in {1,...,5}  {\draw[red,thin] (a\j)node{$\bullet$};}
    \path (a2)--(a1) coordinate[pos=.5] (m1);\path (a2)--(a4) coordinate[pos=.55] (m2);
    \path (a3)--(a1) coordinate[pos=.45] (m4);\path (a3)--(a4) coordinate[pos=.5] (m3);
    \draw[Emerald,thin,dotted](a1)edge[->,>=latex](m1);
    \draw[Emerald,thin,dotted](a3)edge[->,>=latex](m3);
    \draw[Emerald,thin,dotted](a2)edge[->>,>=latex](m2);
    \draw[Emerald,thin,dotted](a1)edge[->>,>=latex](m4);
    \draw[Emerald,thin,dotted](a2)to(m1)(a4)to(m3)(a4)to(m2)(a3)to(m4);
\draw[red,thin](14+4.666667,9)to(a5)to(a2);
\draw[red,thin](14+4.666667,1)to(16+4,5)  (a1)to[bend right=10](a4)to(a5);

\draw[ultra thick, red](14-.2,3+.2)  to (15.3333-.2,5) to (16,6+.1)
                  .. controls +(50:1) and +(180:1) ..  (18-.2,7+.2);
\draw[red,thin]
    (18-.2,3+.2)to(19.3333-.2,5)     (14-.2,7+.2)to(15.3333-.2,9)     (18-.2,7+.2)to(19.3333-.2,9)
    (19.3333-.2,5)to(20,6+.1)     (15.3333-.2,5-4)to(16,6-4+.1)     (19.3333-.2,5-4)to(20,6-4+.1);
\draw[red,thin]  (16-4,6+.1).. controls +(50:1) and +(180:1) ..  (18-4-.2,7+.2);
\draw[red,thin]  (16,6-4+.1).. controls +(50:1) and +(180:1) ..  (18-.2,7-4+.2);
\draw[red,thin]  (16-4,6-4+.1).. controls +(50:1) and +(180:1) ..  (18-4-.2,7-4+.2);
  \end{tikzpicture}
\caption{The mutation sequence $\mu_{321}$ on a higher genus surface}
\label{fig:mutation}

\begin{tikzpicture}[scale=.9,rotate=-45]
\draw(0,0)node(a3){$3$}
     (0,2)node[blue](a5){$5$}(0,3)node[above,white]{$x$}
     (2,0)node(a1){$1$}node{$\bigcirc$}
     (2,2)node(a2){$2$}
     (3,-1)node[blue](a4){$4$};
\draw[->,>=latex] (a2)to(a1);\draw[->,>=latex] (a2)to(a5);\draw[->,>=latex] (a1)to(a4);
\draw[->,>=latex] (a1)to(a3);\draw[->,>=latex] (a5)to(a3);
\draw[->,>=latex] (a3)to[bend right=10](a2);
\draw[->,>=latex] (a3)to[bend left=10](a2);
\end{tikzpicture}
\;
\begin{tikzpicture}[scale=.9,rotate=-45]
\draw(0,0)node(a3){$3$}
     (0,2)node[blue](a5){$5$}
     (2,0)node(a1){$7$}
     (2,2)node(a2){$2$}node{$\bigcirc$}
     (3,-1)node[blue](a4){$4$};
\draw[<-,>=latex] (a2)to(a1);\draw[->,>=latex] (a2)to(a5);\draw[<-,>=latex] (a1)to(a4);
\draw[<-,>=latex] (a1)to(a3);\draw[->,>=latex] (a5)to(a3);\draw[<-,>=latex] (a4)to(a2);
\draw[->,>=latex] (a3)to(a2);
\end{tikzpicture}
\;
\begin{tikzpicture}[scale=.9,rotate=-45]
\draw(0,0)node(a3){$3$}node{$\bigcirc$}
     (0,2)node[blue](a5){$5$}
     (2,0)node(a7){$7$}
     (2,2)node(a8){$8$}
     (3,-1)node[blue](a4){$4$};
\draw[->,>=latex] (a5)to(a8);\draw[->,>=latex] (a8)to(a3);
\draw[->,>=latex] (a8)to(a7);\draw[->,>=latex] (a3)to(a4);
\draw[->,>=latex] (a3)to(a7);\draw[->,>=latex] (a4)to(a8);
\draw[->,>=latex] (a7)to(a5);
\end{tikzpicture}
\;
\begin{tikzpicture}[scale=.9,rotate=-45]
\draw(0,0)node(a9){$9$}
     (0,2)node[blue](a5){$5$}
     (2,0)node(a7){$7$}
     (2,2)node(a8){$8$}
     (3,-1)node[blue](a4){$4$};
\draw[->,>=latex] (a5)to(a8);\draw[<-,>=latex] (a8)to(a9);
\draw[->,>=latex] (a8)to[bend right=10](a7);
\draw[->,>=latex] (a8)to[bend left=10](a7);
\draw[<-,>=latex] (a9)to(a4);
\draw[<-,>=latex] (a9)to(a7);\draw[->,>=latex] (a7)to(a5);
\end{tikzpicture}
\caption{The mutation sequence $\mu_{321}$ of the quiver}
\label{fig:quiver}
\end{figure}

\begin{remark}\label{rem:T-rotation}
Presumably, it is possible to use the same method as in Lemma~\ref{lem:T-rotation2}
for all cases of tagged arcs.
However, this becomes complicated  when the arc connects two punctures.
Instead, we use the tagged mapping class group to give a more conceptual proof.
\end{remark}

\begin{theorem}\label{thm:T-rotation}
The tagged rotation $\varrho\in\TM(\surf)$ on $\TA$ becomes the shift $[1]$ on $\obj$, i.e.
$\iota_\surf(\varrho)=[1]$.
\end{theorem}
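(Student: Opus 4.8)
The goal is to upgrade the pointwise calculations of Lemmas~\ref{lem:T-rotation} and~\ref{lem:T-rotation2} into the statement that $\iota_\surf(\varrho)=[1]$ as an element of $\Aut_0\C{}{\surf}$. The key observation is that two auto-equivalences in $\Aut_0\C{}{\surf}$ agree precisely when they act identically on the set $\obj$ of reachable indecomposable objects, by definition of $\stab\obj$. Thus it suffices to show that $\iota_\surf(\varrho)$ and the shift $[1]$ agree on every object $\zeta(\alpha)$ for $\alpha\in\TA$, i.e.
\begin{gather*}
    \zeta(\varrho(\alpha))=\zeta(\alpha)[1]\qquad\text{for all }\alpha\in\TA.
\end{gather*}
By the definition of $\iota_\surf$ via the canonical identifications in the proof of Lemma~\ref{lem:MCG}, the action of $\iota_\surf(\varrho)$ on $\zeta(\alpha)$ is exactly $\zeta(\varrho(\alpha))$, so the whole theorem reduces to establishing this identity for \emph{all} tagged arcs.

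\textbf{Reduction via orbits of the tagged rotation.}
First I would partition $\TA$ according to the endpoint data: arcs with at least one marked endpoint and distinct endpoints (Case~I/II of Lemma~\ref{lem:T-rotation}), non-contractible loops at a marked point (Lemma~\ref{lem:T-rotation2}), and the remaining cases --- most importantly arcs joining two punctures, and loops enclosing one or two punctures. For the first two families the identity is already proved. The strategy for the remaining arcs is \emph{not} to redo the triangulated-category bookkeeping (as Remark~\ref{rem:T-rotation} warns, this is combinatorially painful for puncture-to-puncture arcs), but to exploit the group structure of $\TM(\surf)$ together with the already-established cases. Concretely, since $\iota_\surf$ is a group homomorphism (Lemma~\ref{lem:MCG}) and $[1]$ is a fixed auto-equivalence, it is enough to check the relation on a generating family of arcs up to the $\TM(\surf)$-action: if $\alpha=\psi(\beta)$ for some $\psi\in\TM(\surf)$ and some $\beta$ for which the relation is known, and if $\iota_\surf(\psi)$ commutes with $[1]$, then the relation propagates to $\alpha$. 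The point is that $\varrho$ is central-like with respect to the rotations and tagging switches, so one can transport a known case to an unknown one by an element of the mapping class group.

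\textbf{Handling the puncture-to-puncture and enclosing-loop arcs.}
The main obstacle is exactly the arcs excluded from Lemmas~\ref{lem:T-rotation} and~\ref{lem:T-rotation2}: a tagged arc $\alpha$ both of whose endpoints are punctures, and loops enclosing punctures. Here I would use the tagging switches $\delta^P$ to relate such an arc to one with a marked endpoint. For an arc $\wideparen{PQ}$ between two punctures, applying a suitable diffeomorphism and a composite of tagging switches produces (via the canonical bijection $\zeta$) the \emph{same} object up to shift as an arc that falls into the already-treated cases; one then invokes that $\iota_\surf$ respects composition and that the previously-established identities hold, to conclude $\zeta(\varrho(\alpha))=\zeta(\alpha)[1]$ for these arcs as well. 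I expect the delicate part to be verifying that the requisite $\psi\in\TM(\surf)$ exists and that its image under $\iota_\surf$ genuinely commutes with $[1]$ on the relevant objects --- this is where the compatibility~\eqref{eq:zetastable} of the bijections $\zeta_\T$ under flips, and the fact that $\varrho$ is built from the commuting pieces $\rho_Y$ and $\delta^P$, must be combined carefully. Once every $\TA$-orbit contains a representative covered by Lemma~\ref{lem:T-rotation} or~\ref{lem:T-rotation2}, the identity holds on all of $\obj$, and therefore $\iota_\surf(\varrho)=[1]$ in $\Aut_0\C{}{\surf}$.
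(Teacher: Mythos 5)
There is a genuine gap in your treatment of the arcs excluded from Lemmas~\ref{lem:T-rotation} and~\ref{lem:T-rotation2}, and it cannot be repaired within your strategy. Every element of $\TM(\surf)$ preserves the set of punctures $\P$ and the set of marked points $\M$ separately, and a tagging switch does not change the underlying arc; consequently the $\TM(\surf)$-orbit of an arc joining two punctures consists entirely of arcs joining two punctures. No such orbit contains a representative of the types treated in the two lemmas, so your concluding premise (``once every $\TA$-orbit contains a representative covered by Lemma~\ref{lem:T-rotation} or~\ref{lem:T-rotation2}\dots'') is never satisfied for exactly the arcs that caused the trouble. A secondary problem is the propagation step itself: to pass from $\zeta(\varrho(\beta))=\zeta(\beta)[1]$ to the same identity for $\alpha=\psi(\beta)$ you need $\psi$ to commute with $\varrho$ in $\TM(\surf)$, not $\iota_\surf(\psi)$ with $[1]$ --- the latter is automatic for triangle functors and gives nothing --- and centrality of $\varrho$ is not available before the theorem is proved (it is essentially a consequence of it).

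The idea you are missing, and the one the paper uses, is that one does not need $\zeta(\varrho(\alpha))=\zeta(\alpha)[1]$ for \emph{all} $\alpha\in\TA$. An element of $\Aut_0\C{}{\surf}$ is already determined by its action on a single reachable cluster tilting set, since auto-equivalences commute with mutation and every object of $\obj$ lies in a cluster tilting set obtained from any given one by finitely many mutations. It therefore suffices to exhibit \emph{one} tagged triangulation all of whose arcs are of the types handled in Lemmas~\ref{lem:T-rotation} and~\ref{lem:T-rotation2}; then $\iota_\surf(\varrho)$ and $[1]$ agree on the corresponding cluster tilting set and hence coincide in $\Aut_0\C{}{\surf}$ by Lemma~\ref{lem:MCG}. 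The paper constructs such a triangulation by induction on the rank $n$, starting from five basic surfaces and adding marked points, boundary components or punctures one at a time while keeping every arc of the allowed types; the puncture-to-puncture arcs and problematic loops are thereby avoided altogether rather than analyzed.
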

\begin{proof}
We only need to show that there exists a tagged triangulation $\st$
consisting of the types of arcs considered
in Lemma~\ref{lem:T-rotation} and Lemma~\ref{lem:T-rotation2}.
Since the action on $\obj$ of an element in $\Aut_0\C{}{\surf}$ is determined by
the action on a particular cluster tilting set,
the theorem then follows by Lemma~\ref{lem:MCG}.

\begin{figure}[t]\centering
\begin{tikzpicture}[scale=.5]
  \foreach \j in {1,2,3,5,6,7}{  \draw[thick,red] (22.5-90:6)--(22.5+45*\j-90:6);}
  \draw[thick,red] (22.5-90:6)to[bend left=15](22.5+45*4-90:6);
  \draw[thick,red] (22.5-90:6)to[bend right=15](22.5+45*4-90:6);
  \foreach \j in {1,2,5,6}{
        \draw[thick,red](22.5-45*\j:6)edge[->,>=latex](22.5-45*\j+45:6)node{$\bullet$};}
  \foreach \j in {3,4,7,8}{
        \draw[thick,red](22.5-45*\j:6)edge[<-,>=latex](22.5-45*\j+45:6)node{$\bullet$};}

\draw[very thick, fill=gray!11] (22.5-90:6)
    .. controls +(90+10:5) and +(90+45-10:5) ..(22.5-90:6);

\draw(-45-45*1:6.5)node{$a_1$};\draw(-45-45*3:6.5)node{$a^{-1}_1$};
\draw(-45-45*2:6.5)node{$b_1$};\draw(-45-45*4:6.5)node{$b^{-1}_1$};
\draw(-45-45*5:6.5)node{$a_2$};\draw(-45-45*7:6.5)node{$a^{-1}_2$};
\draw(-45-45*6:6.5)node{$b_2$};\draw(-45-45*8:6.5)node{$b^{-1}_2$};
\end{tikzpicture}
\caption{The basic case \textbf{i}: $4g$-gon presentation of a genus $g=2$ surface}
\label{fig:9}

\begin{tikzpicture}[scale=.3]
  \draw[very thick](0,8)node{};
  \draw[very thick](0,0)circle(6);  \draw[very thick, fill=gray!11](0,0)circle(2);
  \draw[thick,red] (0,-6).. controls +(135:3) and +(180:6) ..(0,2)
    .. controls +(0:6) and +(45:3) ..(0,-6);
  \draw[very thick](0,-6)node{$\bullet$}(0,2)node{$\bullet$};
\end{tikzpicture}
\begin{tikzpicture}[scale=.3]
\draw[thick,red](0,-1)to[bend left=60](0,-6);
\draw[thick,red](0,-1)to[bend right=60](0,-6);

  \draw[very thick](0,7)node{};
  \draw[very thick](0,0)circle(6);
  \draw[thick,red] (0,-6).. controls +(155:3) and +(180:6) ..(0,2)
    .. controls +(0:6) and +(25:3) ..(0,-6);
  \draw[very thick](0,-6)node{$\bullet$}(0,2)node{$\bullet$}(0,-1)node{$\bullet$}
    (0.5,-1.4)node[red]{+};
\end{tikzpicture}
\begin{tikzpicture}[scale=.3]
  \draw[very thick](0,7)node{};
  \draw[very thick](0,0)circle(6);
  \draw[red, thick] (0,6)--(0,0)--(0,-6);
  \draw[very thick](0,-6)node{$\bullet$}(0,0)node{$\bullet$}(0,6)node{$\bullet$};
\end{tikzpicture}
\begin{tikzpicture}[scale=.33,rotate=45]
  \draw[red, thick] (0,6)(0,-6);  \draw[red, thick] (0,6)--(0,-6);
  \draw[very thick](0,6)node{$\bullet$}--(6,0)node{$\bullet$}--
        (0,-6)node{$\bullet$}--(-6,0)node{$\bullet$}--cycle;
\end{tikzpicture}
\caption{The basic cases \textbf{ii}, \textbf{iii}, \textbf{iv}, \textbf{v}}
\label{fig:basic}
\end{figure}

Any marked surface falls into at least one of the following cases:
\begin{itemize}
  \item[\textbf{I.}] $g>0$,
  \item[\textbf{II.}] $g=0$, $b\geq2$,
  \item[\textbf{III.}] $g=0$, $b=1$, $p\geq2$,
  \item[\textbf{IV.}] $g=0$, $b=1$, $p=1$, $m\geq2$,
  \item[\textbf{V.}] $g=0$, $b=1$, $p=0$ and $m\geq4$.
\end{itemize}
We use induction on the number $n=6g+3p+3b+m-6$ defined in equation \ref{eq:n}, starting from one of the following basic cases which represent the minimal possible $n$ in each of the five cases I to V above:
\begin{itemize}
  \item[\textbf{i.}] $g>0$, $b=m=1$ and $p=0$,
  \item[\textbf{ii.}] $g=0$, $b=m=2$ and $p=0$,
  \item[\textbf{iii.}] $g=0$, $b=m=1$ and $p=2$,
  \item[\textbf{iv.}] $g=0$, $b=p=1$ and $m=2$,
  \item[\textbf{v.}] $g=0$, $b=1$, $p=0$ and $m=4$.
\end{itemize}
Any marked surface can be obtained from
one of the corresponding basic cases by adding
\begin{itemize}
\item[(a)] a marked point to an existing boundary component,
\item[(b)] a boundary component with one marked point, or
\item[(c)] a puncture.
\end{itemize}
As start of the induction, we first show that in each of  the basic cases \textbf{i} to \textbf{v} there exists a tagged triangulation
consisting of the two types of arcs considered in Lemma~\ref{lem:T-rotation} and Lemma~\ref{lem:T-rotation2}.
For the basic case \textbf{i}, Figure~\ref{fig:9} shows for $g=2$  a tagged triangulation where every arc is a non-contractible loop with endpoint in $\M$. Thus, all arcs are of the type considered  in Lemma~\ref{lem:T-rotation2}, and an analogous triangulation can be defined for arbitrary $g \ge 1$.
For the basic cases \textbf{ii, iii, iv} and \textbf{v}, we show in Figure~\ref{fig:basic} a triangulation where each arc has two different endpoints, at least one of them belonging to $\M$. Thus, these triangulations
consist of the type of arc considered in Lemma~\ref{lem:T-rotation}.
Note that in each of these basic cases, there is a triangle with a boundary arc.

We now proceed to the inductive step. Figure~\ref{fig:add} shows that in the each of the three cases (a), (b) and (c) of adding,
we can modify the tagged triangulation $\st$ of the old surface
(within the triangle with a boundary arc)
to get a tagged triangulation $\st'$ for the new surface,
such that $\st'$ only consists of the types of arcs considered  in Lemma~\ref{lem:T-rotation} and Lemma~\ref{lem:T-rotation2}, which completes the proof.

More precisely, in case (a) of adding a marked point C to an existing boundary component, we illustrate in the left picture of Figure~\ref{fig:add} how to add a new arc of the type considered  in Lemma~\ref{lem:T-rotation}.

In case (b) of adding a boundary component with one marked point C, we show in the middle picture of Figure~\ref{fig:add} how to add the component into a triangle with a boundary edge such that all new arcs are of the type considered  in Lemma~\ref{lem:T-rotation}.

Finally, in case (c) of adding a puncture D into a triangle ABC with a boundary edge AB, we depict the situation in the right picture of Figure~\ref{fig:add}. The added arcs connecting A and the puncture D are of the type considered  in Lemma~\ref{lem:T-rotation}, as well as the new arc connecting A and C in case these are different points. The case when A and C coincide within a triangle with  boundary edge AB occurs in our construction only when one starts with type \textbf{i}, thus in this case the new arc is  of the type considered  in Lemma~\ref{lem:T-rotation2}.

\begin{figure}[h]\centering
\begin{tikzpicture}[scale=.5]
 \draw (0.5,0.5)    node{$_C$};

  \draw[thick,red] (-3,0)--(0,7)--(3,0) (0,8);
  \draw[very thick](-3,0)node{$\bullet$}--(3,0)node{$\bullet$}(0,7)node{$\bullet$};

  \draw[thick, Emerald](0,0)--(0,7);
  \draw(0,0)node[white]{$\bullet$}node[thick, Emerald]{$\bullet$};
  \draw(0,7)node{$\bullet$};
\end{tikzpicture}
\begin{tikzpicture}[scale=.5]
  \draw[thick,red] (-3,0)--(0,7)--(3,0);
 \draw (0,0.7)    node{$_C$};
  \draw[thick, Emerald, fill=gray!11](0,2)circle(.7);
  \draw[thick, Emerald](3,0)--(0,1.3)--(-3,0);

  \draw[thick, Emerald] (0,1.3).. controls +(-180:2) and +(-90-15:4) ..(0,7);
  \draw[thick, Emerald] (0,1.3).. controls +(0:2) and +(-90+15:4) ..(0,7);

  \draw[very thick](-3,0)node{$\bullet$}--(3,0)node{$\bullet$}(0,7)node{$\bullet$};
  \draw(0,1.3)node[white]{$\bullet$}(0,1.3)node[thick, Emerald]{$\bullet$};
\end{tikzpicture}
\begin{tikzpicture}[scale=.5]
 \draw (-3.4,0.5)    node{$_A$};
 \draw (3.4,0.5)    node{$_B$};
 \draw (0,3.5)    node{$_D$};
\draw (0.5,7.5)    node{$_C$};
   \draw[thick,red] (-3,0)--(0,7)--(3,0) (0,8.5);

  \draw[thick, Emerald] (-3,0)to[bend left=15](0,3);
  \draw[thick, Emerald] (-3,0)to[bend left=-15](0,3);
  \draw[thick, Emerald] (-3,0).. controls +(10:4) and +(-90+15:5) ..(0,7);
  \draw[thick, Emerald] (0-.5,3-.3) node {+};

  \draw(0,3)node[white]{$\bullet$}(0,3)node[thick, Emerald]{$\bullet$};
  \draw[very thick](-3,0)node{$\bullet$}--(3,0)node{$\bullet$}(0,7)node{$\bullet$};
\end{tikzpicture}
\caption{Three cases of adding (in green)}
\label{fig:add}
\end{figure}
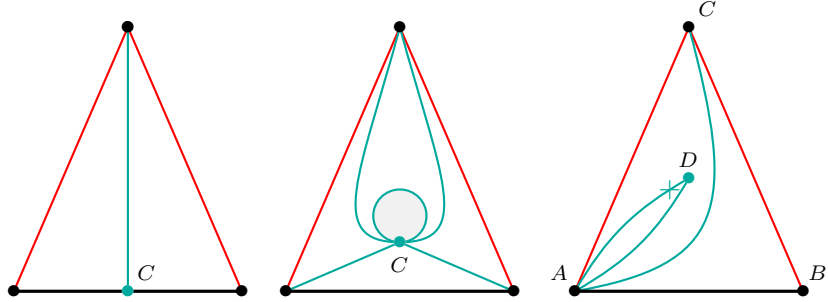
\end{proof}

\section{Applications}
\subsection{On the Jacobian algebras}
As discussed earlier, a tagged triangulation $\T$ of a marked surface $\surf$ with non-empty boundary defines  a Jacobi-finite quiver $Q$ with potential $W$.
We denote the corresponding Jacobi algebra $\J(Q,W)$ by $\J(\T)$, and $\mod \J(\T)$ denotes the category of finitely generated $\J(\T)-$modules.
Moreover, we denote by $\Add\zeta(\T) $ the additive subcategory of $\hua{C}(\surf)$ generated by the object $\zeta(\T) $, that is, all direct sums of summands of $\zeta(\T) $.

\begin{lemma}\label{lem:quotient}\cite{A,KR}
There is a canonical equivalence
\[\theta_\T\colon\hua{C}(\surf)/\Add\zeta(\T) \to \mod \J(\T).\]
\end{lemma}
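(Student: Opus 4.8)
The plan is to realize this statement as an instance of Amiot's and Keller--Reiten's general theorem relating a $2$-Calabi--Yau triangulated category to the module category over the endomorphism algebra of a cluster-tilting object. Write $T=\zeta(\T)$ for the image of the Ginzburg dg algebra $\Gamma_\T$ under the projection $\per\Gamma_\T\to\C{}{\surf}$. Since $\partial\surf\neq\emptyset$, the quiver with potential $(Q_\T,W_\T)$ is Jacobi-finite by \cite{CLF}, so by \cite{A} the category $\C{}{\surf}$ is $\Hom$-finite and $2$-Calabi--Yau, the object $T$ is a cluster-tilting object, and there is a canonical isomorphism $\End_{\C{}{\surf}}(T)\cong\J(\T)$. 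This last identification is what makes the target $\mod\J(\T)$ appear.

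First I would recall the general equivalence. For a cluster-tilting object $T$ in a $\Hom$-finite $2$-Calabi--Yau triangulated category, the functor $\Hom_{\C{}{\surf}}(T,-)$ induces an equivalence onto $\mod\End_{\C{}{\surf}}(T)$ after passing to the quotient by the ideal of morphisms factoring through $\Add(T[1])$; this is \cite{KR} (see also the general framework of Koenig--Zhu). The two ingredients to verify are \emph{density} and the identification of the \emph{kernel}. Density follows from the cluster-tilting property: every $X\in\C{}{\surf}$ sits in a triangle $T_1\to T_0\to X\to T_1[1]$ with $T_0,T_1\in\Add T$, and applying $\Hom_{\C{}{\surf}}(T,-)$ yields an exact sequence
\[
  \Hom_{\C{}{\surf}}(T,T_1)\to\Hom_{\C{}{\surf}}(T,T_0)\to\Hom_{\C{}{\surf}}(T,X)\to 0,
\]
using $\Hom_{\C{}{\surf}}(T,T_1[1])=\Ext^1_{\C{}{\surf}}(T,T_1)=0$; thus every $\J(\T)$-module is realized as a cokernel of a map between projectives. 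The kernel is computed from the characterization $\Add T=\{X\mid\Ext^1_{\C{}{\surf}}(T,X)=0\}$ of a cluster-tilting object, which gives $\Hom_{\C{}{\surf}}(T,X)=0$ if and only if $X\in\Add(T[1])$.

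To match the statement of the lemma, which quotients by $\Add\zeta(\T)=\Add T$ rather than by $\Add(T[1])$, I would compose with the shift autoequivalence $[1]\colon\C{}{\surf}\to\C{}{\surf}$. Since $[1]$ is an autoequivalence carrying $\Add T$ to $\Add(T[1])$, it descends to an equivalence $\C{}{\surf}/\Add T\to\C{}{\surf}/\Add(T[1])$, and the composite
\[
  \theta_\T\colon\C{}{\surf}/\Add\zeta(\T)\xrightarrow{\ [1]\ }\C{}{\surf}/\Add(T[1])\xrightarrow{\ \Hom_{\C{}{\surf}}(T,-)\ }\mod\J(\T)
\]
is the desired equivalence; explicitly $\theta_\T=\Ext^1_{\C{}{\surf}}(\zeta(\T),-)$, whose kernel is exactly $\Add\zeta(\T)$ by the characterization above. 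Canonicity is automatic, since the whole construction refers only to $T=\zeta(\T)$ and the triangulated structure of $\C{}{\surf}$.

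The main obstacle is not the categorical bookkeeping but the inputs borrowed from \cite{A}: that $\C{}{\surf}$ is genuinely $\Hom$-finite and $2$-Calabi--Yau and that $T$ is cluster-tilting with $\End_{\C{}{\surf}}(T)\cong\J(\T)$. These rest on the Jacobi-finiteness of $(Q_\T,W_\T)$, guaranteed here by $\partial\surf\neq\emptyset$ through \cite{CLF}. Granting these, the remaining subtlety is purely conventional---keeping track of whether one quotients by $\Add T$ or $\Add(T[1])$---and is resolved by the shift autoequivalence as above.
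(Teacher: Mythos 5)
Your argument is correct and is exactly the route the paper takes: the lemma is stated there with no proof beyond the citation to Amiot and Keller--Reiten, and your write-up simply unpacks that citation (Jacobi-finiteness from $\partial\surf\neq\emptyset$, hence a $\Hom$-finite $2$-Calabi--Yau category with cluster-tilting object $T=\zeta(\T)$ and $\End(T)\cong\J(\T)$, then the standard equivalence via $\Hom(T,-)$). Your handling of the $\Add T$ versus $\Add(T[1])$ convention by composing with the shift, i.e.\ using $\Ext^1_{\C{}{\surf}}(\zeta(\T),-)$, is the right resolution of the only genuine subtlety.
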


By the proof in \cite[Section~3.5]{KR},
the AR-translation on $\J(\T)$ is induced from the
AR-translation $\tau$ (or the shift $[1]$) of the cluster category $\hua{C}(\surf)$.
Thus we have the following corollary.

\begin{corollary}\label{cor:AR}
Under the bijection $\theta_\T$ in Lemma~\ref{lem:quotient},
the tagged rotation $\varrho$ on $\TA$ induces the AR-translation $\tau$
on the objects in $\mod \J(\T)$ that are the images of the reachable rigid objects.
\end{corollary}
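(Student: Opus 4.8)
The plan is to deduce Corollary~\ref{cor:AR} directly from Theorem~\ref{thm:T-rotation} together with Lemma~\ref{lem:quotient}, rather than re-proving anything from scratch. By Theorem~\ref{thm:T-rotation} we know that the tagged rotation $\varrho$ acts on $\obj$ exactly as the shift functor $[1]$ of the cluster category $\hua{C}(\surf)$, via the canonical bijection $\zeta$. Since in this 2-Calabi-Yau setting the shift $[1]$ coincides with the AR-translation $\tau$ of $\hua{C}(\surf)$ (as noted in the introduction, citing \cite{KR}), the first step is simply to translate the equality $\iota_\surf(\varrho)=[1]$ into the statement that $\varrho$ realizes $\tau$ on the relevant objects of $\hua{C}(\surf)$.

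The second and main step is to transport this identification across the quotient equivalence $\theta_\T$ of Lemma~\ref{lem:quotient}. The quoted result from \cite[Section~3.5]{KR} tells us that the AR-translation of $\mod\J(\T)$ is \emph{induced} from the AR-translation $\tau$ of $\hua{C}(\surf)$ under $\theta_\T$; that is, the quotient functor intertwines the two translations on the images of objects that survive the quotient by $\Add\zeta(\T)$. Thus I would invoke this compatibility to conclude that, for a reachable rigid object $X_\alpha=\zeta(\alpha)$ whose image in $\hua{C}(\surf)/\Add\zeta(\T)$ is nonzero, the AR-translate of $\theta_\T(X_\alpha)$ in $\mod\J(\T)$ is precisely $\theta_\T(X_\alpha[1])=\theta_\T(\zeta(\varrho(\alpha)))$. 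Chaining these two identifications yields exactly the assertion of the corollary.

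The one point requiring care is the domain restriction: $\theta_\T$ is a quotient functor, so objects lying in $\Add\zeta(\T)$ are sent to zero, and the AR-translation statement can only be sensibly asserted for those reachable rigid objects whose image in $\mod\J(\T)$ is nonzero. This is why the corollary is phrased for ``the objects in $\mod\J(\T)$ that are the images of the reachable rigid objects''—one works with $\theta_\T(\zeta(\alpha))$ for $\alpha$ not in $\T$ itself. I would make explicit that the compatibility of $\theta_\T$ with the respective AR-translations is precisely the content extracted from \cite[Section~3.5]{KR}, so that no further homological computation inside $\mod\J(\T)$ is needed; the entire argument is a formal consequence of Theorem~\ref{thm:T-rotation}, the coincidence $\tau=[1]$ in $\hua{C}(\surf)$, and the functoriality of $\theta_\T$.

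The main obstacle I anticipate is not a computational one but a bookkeeping one: ensuring that the notion of ``AR-translation induced under $\theta_\T$'' is applied only where it is valid, and verifying that the naturality of $\theta_\T$ with respect to the autoequivalence $\iota_\surf(\varrho)=[1]$ genuinely descends to $\mod\J(\T)$. Since Lemma~\ref{lem:quotient} provides the equivalence and \cite{KR} supplies the intertwining of the translations, the heart of the matter is organizing these two inputs so that the reachable rigid objects are tracked correctly through both $\zeta$ and $\theta_\T$.
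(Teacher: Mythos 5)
Your proposal is correct and follows essentially the same route as the paper: the paper's argument is precisely to combine Theorem~\ref{thm:T-rotation} (identifying $\varrho$ with $[1]$ on $\obj$) with the fact from \cite[Section~3.5]{KR} that the AR-translation on $\mod\J(\T)$ is induced from the AR-translation $\tau=[1]$ of $\C{}{\surf}$ under the quotient equivalence $\theta_\T$. Your additional care about objects in $\Add\zeta(\T)$ being killed by the quotient is a sensible bookkeeping point that the paper leaves implicit.
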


\subsection{Shifts and Seidel-Thomas braid group}
Let $\Gamma=\Gamma(Q,W)$ be the Ginzburg dg algebra of any quiver with potential.
It is known that $\D_{fd}(\Gamma)$ is a Calabi-Yau-$3$ category,
which admits a standard heart $\zero$ generated by simple $\Gamma$-modules $S_e$, for $e\in Q_0$,
each of which is a $3$-spherical object.
Recall that an object $S$ is \emph{$N$-spherical} when
$\Hom^{\bullet}(S, S)=\k \oplus \k[-N]$.
Moreover, we recall (e.g. \cite{ST})
a distinguished family of auto-equivalences of $\D_{fd}(\Gamma)$.

\begin{definition}
\label{def:twist}
The \emph{twist functor $\phi$ of a spherical object} $S$
is defined by
\begin{gather}
    \phi_S(X)=\Cone\left(X\to S\otimes\Hom^\bullet(X,S)^\vee \right)[-1]
\end{gather}
with inverse
\begin{gather}
    \phi_S^{-1}(X)=\Cone\left(S\otimes\Hom^\bullet(S,X)\to X\right)
\end{gather}
Her we denote the graded dual of a graded $\k$-vector space
$V=\oplus_{j\in\ZZ} V_j[j]$
by $V^\vee=\bigoplus_{j\in\ZZ} V_j^*[-j]$.
The \emph{Seidel-Thomas braid group}, denoted by $\Br\Gamma$,
is the subgroup of $\Aut\D_{fd}(\Gamma)$ generated by
the twist functors of the simples in $\Sim\zero$.
\end{definition}
For each simple object $S$ in a reachable heart $\mathcal H$ in $\D_{fd}(\Gamma)$  there are two different simple tilts that can be applied to $\mathcal H$, and the resulting tilted hearts (there forward and the backward tilt with respect to $S$) are related to each other by a spherical twist, see \cite{K10}. On the level of the cluster category the two different tilts coincide, thus the cluster exchange graph can be identified with the exchange graph of reachable hearts modulo the braid group.
We apply this to surface triangulations: Given a tagged triangulation $\T$ of $\surf$, we recall
from section 2 that the category $\D(\surf)= \D_{fd}(\Gamma)$ is independent (up to derived equivalence) of the triangulation $\T$ used to define the dg algebra $\Gamma$. We accordingly denote the Seidel-Thomas braid group of $\D(\surf)$ by $\Br \surf$ and
we also denote by $\EGp(\D(\surf))$ the principal component of the exchange graph of hearts in $\D(\surf)$,
which consists of all hearts that can be obtained
through iterated (simple) tilts from the standard heart $\zero$.
Let $\CEG{}{\surf}$ be the \emph{oriented exchange graph} of $\C{}{\surf}$,
which can be obtained from $\CEG{*}{\surf}$ by replacing each unoriented edge with a 2-cycle.
By \cite{A} (cf. \cite[Theorem~8.6]{KQ}),
there is a map
\begin{gather}\label{eq:ups}
    \upsilon:\EGp(\D(\surf)) \to \CEG{}{\surf}
\end{gather}
which induces (cf. \cite{K6} for the general case and \cite{KQ} for the acyclic case) an isomorphism
\[
    \CEG{}{\surf} \cong \EGp(\D(\surf))/\Br \surf.
\]
More precisely, the map $\upsilon$ is defined as follows.
Let $\h$ be a heart in $\EGp(\D(\surf))$ which corresponds to
a t-structure $\hua{P}$ in $\D_{fd}(\Gamma)$.
Lift $\hua{P}$ to a t-structure in $\per(\Gamma)$ via the inclusion in \eqref{eq:ses}.
Such a t-structure corresponds to a silting object of $\per(\Gamma)$.
The image of this silting object under the functor $\per(\Gamma) \to \C{}{\Gamma} $
is defined to be $\upsilon(\h)$.
Furthermore, it is not hard to show that
$\upsilon$ commutes with the shift functor, i.e.
\[\upsilon(\h[1])=\upsilon(\h)[1].\]

\begin{theorem}\label{thm:shift}
Let $\surf$ be  a marked surface
which is not a polygon or a once-punctured polygon.
Then the intersection of $\Br\surf$ with the subgroup  $\ZZ[1]$ of shifts in {\rm Aut} $\D(\surf)$ is trivial.
\end{theorem}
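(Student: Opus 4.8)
The plan is to translate the statement into a question about the order of the tagged rotation $\varrho$ in the tagged mapping class group, and then to settle that question topologically. So suppose for contradiction that $[n]\in\Br\surf$ for some $n\neq0$. Recall from \eqref{eq:ups} that $\upsilon$ induces an isomorphism $\CEG{}{\surf}\cong\EGp(\D(\surf))/\Br\surf$ and commutes with the shift. Viewing $[n]$ as an element of $\Br\surf$ acting on the hearts of $\EGp(\D(\surf))$, its action on the quotient $\CEG{}{\surf}$ is trivial, so that $\upsilon(\h)[n]=\upsilon(\h[n])=\upsilon(\h)$ for every heart $\h$; since $\upsilon$ is onto, the shift $[n]$ fixes every reachable cluster tilting set as a set. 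I would then upgrade this to a pointwise statement: for a reachable object $X$, pick a reachable cluster tilting set $\hua{P}\ni X$, so that $X[n]\in\hua{P}[n]=\hua{P}$; mutating at any $Y\in\hua{P}$ with $Y\neq X$ one still has $X\in\mu_Y(\hua{P})$ and hence $X[n]\in\mu_Y(\hua{P})[n]=\mu_Y(\hua{P})$, whence $X[n]\in\hua{P}\cap\mu_Y(\hua{P})$ and so $X[n]\neq Y$. Ranging over all such $Y$ forces $X[n]=X$. Thus $[n]=\id$ on $\obj$, which by Theorem~\ref{thm:T-rotation} reads $\iota_\surf(\varrho^n)=[n]=\id$ in $\Aut_0\C{}{\surf}$, and the injectivity of $\iota_\surf$ from Lemma~\ref{lem:MCG} forces $\varrho^n=\id$ in $\TM(\surf)$. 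It therefore suffices to prove that $\varrho$ has infinite order whenever $\surf$ is neither a polygon nor a once-punctured polygon.

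For the order of $\varrho$ I would separate the boundary rotations from the tagging switches. Each $\rho_Y$ can be realized with support in an annular neighbourhood of $Y$, and these neighbourhoods are pairwise disjoint; together with the fact that the tagging switches $\delta^P$ commute with the $\rho_Y$ and only alter taggings, this shows that $\varrho^n=\id$ would force $\prod_{Y\subset\partial\surf}\rho_Y^{\,n}=\id$ and hence $\rho_Y^{\,n}=\id$ for every $Y$. Thus it is enough to exhibit a single boundary component $Y_0$ with $\rho_{Y_0}$ of infinite order. The key input is Example~\ref{ex:rotation}, which gives $\rho_Y^{\,m_Y}=\Dehn{Y}$, together with the standard criterion that $\Dehn{c}$ is isotopic to the identity precisely when $c$ bounds a disc or a once-punctured disc, and has infinite order otherwise. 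Applied to the curve parallel to $Y$, this says that $\rho_Y$ has infinite order exactly when the region cut off from $Y$ by that curve is neither a disc nor a once-punctured disc.

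It then remains to run through the classification of marked surfaces. If $b\geq2$, then for every $Y$ the far side of the parallel curve contains another boundary component; if $g\geq1$ it carries positive genus; and if $b=1$, $g=0$ with $p\geq2$ it is a disc with at least two punctures. In each case this region is neither a disc nor a once-punctured disc, so $\Dehn{Y}$, and with it $\varrho$, has infinite order, giving the contradiction. The only two configurations in which the far region of the unique boundary component is a disc, respectively a once-punctured disc, are $(g,b,p)=(0,1,0)$ and $(0,1,1)$, i.e.\ the polygon and the once-punctured polygon, which are exactly the excluded cases. I expect the main obstacle to be the second paragraph: one must justify that the non-triviality of $\varrho^n$ is detected at a single boundary component in spite of the simultaneous tagging switches, and one must apply the Dehn-twist triviality criterion in the present mapping class group, where isotopies fix the marked points but are allowed to move the boundary.
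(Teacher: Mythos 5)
Your proposal is correct and follows essentially the same route as the paper: both arguments show that $[n]\in\Br\surf$ forces $[n]$ to fix every reachable object (hence, via Theorem~\ref{thm:T-rotation}, that a power of $\varrho$ acts trivially), and both then derive a contradiction from the same trichotomy $b\geq 2$, $p\geq 2$, $g\geq 1$, leaving only the polygon and once-punctured polygon. The only differences are cosmetic but welcome: your double-mutation argument upgrading ``fixes each cluster tilting set'' to ``fixes each reachable object'' makes explicit a step the paper merely asserts, and where the paper exhibits for each case an explicit arc with infinite $\varrho$-orbit, you instead pass through the injectivity of $\iota_\surf$ (Lemma~\ref{lem:MCG}) and the order of $\rho_Y$ via the standard Dehn-twist triviality criterion --- just note that your ``hence $\rho_Y^n=\id$ for every $Y$'' needs the injectivity of multitwists about disjoint essential curves, not merely disjointness of supports.
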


\begin{proof}
Suppose that $[m]\in\Br \surf$ for some integer $m$.
Since the induced action of the braid group on $\CEG{}{\surf}$ is trivial,
$[m]$ acts trivially on $\CEG{}{\surf}$, or $[m]=\id$ in $\Aut_0\C{}{\surf}$.
On the other hand, we claim that there always exists an arc $\gamma$ such that the orbit $\{\varrho^z(\gamma) | z \in \ZZ \}$ has infinite order:
If $\surf$ admits two different boundary components, we can choose any arc $\gamma$ connecting the two components. Likewise, if $\surf$ has more than one puncture, for any arc $\gamma$ connecting one puncture to a marked point on the boundary the set  $\{\varrho^z(\gamma) | z \in \ZZ \}$ contains infinitely many non-isotopic curves.

Thus $\surf$ can have at most one boundary component and one puncture.
Moreover, the genus of $\surf$ must be zero since
the Dehn twist $D$ of a non-contractible closed curve in a higher genus surface has infinite order.
More precisely, for any arc $\gamma$ whose endpoints coincide on the one marked point on the boundary and such that $\gamma$ is non-trivial viewed as an element in  the fundamental group of the surface obtained from $\surf$ by gluing a disc to the boundary component, the orbit of $\gamma$ under $D$ is infinite.
\end{proof}

Let $(Q,W)$ be a quiver with potential
associated to some triangulation of a marked surface $\surf$.
Then $\surf$ is not a polygon or a once-punctured polygon if and only if
$(Q,W)$ is not mutation-equivalent to a quiver of type A or D.
The theorem can thus be rephrased by saying the Thomas-Seidel braid group intersects trivially with the shifts in {\rm Aut} $\D(\surf)$ except in the cluster types A or D.

\subsection{Center of the braid group}

In this subsection, we discuss the centers of the braid groups
of type A or D.
Let $Q$ be a quiver of type A or D, and denote by $\Br_Q$ the \emph{braid group of } $Q$ which is defined by
 the  generating set
$\mathbf{b} = \{ b_i \, | \,i \in Q_0 \}$ and relations
\[b_i b_j = b_j b_i\]
 if there is no arrow between the vertices $i$ and $j$, and
\[b_i b_j b_i= b_j b_i b_j\]
 in case there is an arrow between $i$ and $j$.
Recall that
the quasi-center of $\Br_Q$ is the subgroup of elements
$\tri(Q)$ in $\Br_Q$ satisfying $\tri(Q)\cdot \mathbf{b}\cdot \tri(Q)^{-1}=\mathbf{b}$,
and that this subgroup is an infinite cyclic group generated
by a special element $\widetilde{z}$ of $\Br_Q$, called \emph{fundamental element}.
The center $Z(\Br_Q)$ of $\Br_Q$ is an infinite cyclic group.
The element $z_Q=\widetilde{z}$ generates $Z(\Br_Q)$ if $Q$ is of type $D_n$ for even $n$,
and $z_Q=\widetilde{z}^2$ generates $Z(\Br_Q)$ if $Q$ is of type $D_n$ for odd $n$ or of  type A.
If we denote by $\Gamma$ the Ginzburg dg algebra associated to the quiver $Q$ with zero potential $W=0$, then  there is a quotient map
\[
    \pi:\Br_Q\to\Br\Gamma
\]
since the spherical twists satisfy the braid relation cf. \cite[(7.4)]{KQ}.

\begin{example}\cite{Qiu}
Let $\surf$ be an $(n+3)$-gon as shown in the left picture of Figure~\ref{fig:triangulations}.
Then the tagged rotation has order $n+3$.
Further, for the shift $[1]$ in $\D(\surf)$, we have
\[
    \pi(\widetilde{z}^2)=[n+3].
\]
where $Q$ is a quiver of type $A_n$.
\end{example}

\begin{example}\cite{Qiu}
Similarly, if $\surf$ is an $n$-gon with a puncture as shown in
the right picture of Figure~\ref{fig:triangulations},
then the tagged rotation has order $n$ if $n$ is even and order $2n$ if $n$ is odd.
Further, for the shift $[1]$ in $\D(\surf)$, we have
\[
    \pi(\widetilde{z})=[n],
\]
where $Q$ is a quiver of type $D_n$.
\end{example}

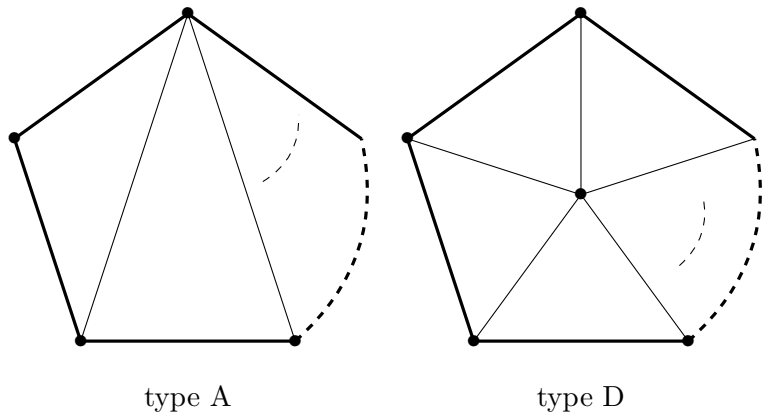
\begin{figure}[b]\centering
\begin{tikzpicture}[scale=.4]
  \draw[very thick, dashed](90-144:6)to[bend right](90-72:6);
  \foreach \j in {3,...,6}{
        \draw[very thick](90-72*\j:6)--(90-72*\j+72:6)node{$\bullet$};}
  \foreach \j in {1,...,4}{  \draw[thin] (90:6)--(90+72*\j:6);}
    \draw(10:2.5)edge[dashed, bend right](36:4.5);
  \draw(-90:6)node[below]{type A};
\end{tikzpicture}
\begin{tikzpicture}[scale=.4]
  \draw[very thick, dashed](90-144:6)to[bend right](90-72:6);
  \foreach \j in {3,...,6}{
        \draw[very thick](90-72*\j:6)--(90-72*\j+72:6)node{$\bullet$};}
  \foreach \j in {1,...,5}{  \draw[thin] (0,0)--(90+72*\j:6);}
    \draw(-36:4)edge[dashed, bend right](0:4);
  \draw(-90:6)node[below]{type D}(0,0)node{$\bullet$};
\end{tikzpicture}
\caption{The triangulations}
\label{fig:triangulations}
\end{figure}

\subsection{Isomorphism Theorem}

In this subsection, we discuss when the  canonical injection
$\iota_\surf\colon\TM(\surf)\hookrightarrow\Aut_0\C{}{\surf}$ from Lemma \ref{lem:MCG} is an isomorphism.
The result can be obtained by cutting the given marked surface along arcs and using induction similar to the methods in section 3.
The details are worked out in \cite{BS}, from where we derive the main result of this subsection:

\begin{theorem}\label{thm:auto}
The  canonical injection
$\iota_\surf\colon\TM(\surf)\hookrightarrow\Aut_0\C{}{\surf}$ is an isomorphism except when $\surf$ is a once-punctured disc with 2 or 4 marked points on the boundary or a twice-punctured disc with 2 marked points on the boundary.
\end{theorem}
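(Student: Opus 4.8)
The plan is to reduce the statement to a rigidity theorem for the tagged arc complex, and to prove that rigidity by induction on the rank, cutting the surface along a combinatorially recognisable arc. By Lemma~\ref{lem:MCG} the map $\iota_\surf$ is already injective, so only surjectivity is at stake. The group $\Aut_0\C{}{\surf}$ acts faithfully on the set $\obj$ of reachable indecomposables, and through the canonical bijection $\zeta$ of Lemma~\ref{lem:bij} this action preserves compatibility of tagged arcs; hence $\Aut_0\C{}{\surf}$ embeds into the automorphism group of the tagged arc complex on $\TA$, equivalently into $\Aut(\EGT(\surf))$, compatibly with $\iota_\surf$. These maps assemble into $\TM(\surf)\subseteq\Aut_0\C{}{\surf}\subseteq\Aut(\EGT(\surf))$, where the first inclusion is $\iota_\surf$ and the injectivity of the composite $\TM(\surf)\to\Aut(\EGT(\surf))$ is the Alexander-method rigidity already invoked in Lemma~\ref{lem:MCG}. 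Consequently it suffices to prove that every automorphism of $\EGT(\surf)$ is induced by an element of $\TM(\surf)$: the two inclusions then collapse to equalities and $\iota_\surf$ is an isomorphism.

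First I would pin down which features of a tagged triangulation an automorphism $\Psi$ of $\EGT(\surf)$ must respect. The local shape of $\EGT(\surf)$ at a vertex $\st$ -- the pattern of squares and pentagons among the $n$ flips available at $\st$ -- reflects the adjacencies of the quiver $Q(\st)$ of Definition~\ref{def-potential}, so $\Psi$ preserves the combinatorial type of the associated quiver up to the symmetries of its diagram. This already isolates the exceptions. For the once-punctured discs with $2$ or $4$ marked points and the twice-punctured disc with $2$ marked points the quivers are of type $D_2$, $D_4$ and $\widetilde{D}_4$, and the diagram symmetries (the $A_1\times A_1$ swap and, above all, the $S_3$-triality of $D_4$ which persists in $\widetilde{D}_4$) produce automorphisms of the finite polytope $\EGT(\surf)$ that have no geometric origin. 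These are the base cases of the induction and the genuine exceptions, and they are verified by direct inspection.

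For every remaining surface I would argue by induction on $n=6g+3p+3b+m-6$, mirroring the cutting-and-gluing scheme of Section~3. The crux is to show that $\Psi$ preserves the class of arcs $\gamma$ that cut off a triangle bounded by a boundary segment; such arcs are exactly those that appear as a source (or sink) of the associated quiver in some triangulation, a property visible in $\EGT(\surf)$. By the change-of-coordinates principle for mapping class groups, after post-composing $\Psi$ with a suitable element of $\TM(\surf)$ one may assume that $\Psi$ fixes one such arc $\gamma$ together with the boundary data it determines. Cutting $\surf$ along $\gamma$ yields a marked surface $\surf'$ of strictly smaller rank whose tagged arc complex is the link of $\gamma$ in $\TA$, and $\Psi$ restricts to an automorphism of $\EGT(\surf')$. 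By the inductive hypothesis this restriction is realised by an element of $\TM(\surf')$, which one glues back across $\gamma$ to a mapping class of $\surf$ inducing $\Psi$; the tagging switches $\delta^P$ account for the semidirect factor $(\ZZ_2)^p$ of $\TM(\surf)$. The details of this induction are carried out in \cite{BS}.

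The main obstacle is precisely the recognition step: one must show that a purely graph-theoretic automorphism of $\EGT(\surf)$ respects enough topology -- the boundary components, the punctures, the tagging involution, and the distinguished triangle-arcs along which to cut -- for the inductive reduction to apply, and that this recognition fails exactly in the three quiver types $D_2$, $D_4$ and $\widetilde{D}_4$. A subsidiary point, but one that makes the reconstruction well defined, is that distinct mapping classes induce distinct automorphisms of $\EGT(\surf)$, so that the mapping class produced by the induction is unique and the gluing across $\gamma$ is unambiguous.
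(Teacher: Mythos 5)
Your overall architecture---injectivity from Lemma~\ref{lem:MCG}, reduction of surjectivity to a combinatorial rigidity statement, and an appeal to \cite{BS} for the cut-and-induct details with the exceptional types $D_2$, $D_4$, $\widetilde{D}_4$---is close to the paper's. But the specific reduction you propose has a genuine gap: the statement you declare sufficient, namely that \emph{every} automorphism of $\EGT(\surf)$ is induced by an element of $\TM(\surf)$, is false outside your list of exceptions. Orientation-reversing symmetries of $\surf$ permute tagged triangulations and commute with flips, hence give graph automorphisms of $\EGT(\surf)$, but they do not lie in $\TM(\surf)$, which is built from orientation-preserving diffeomorphisms. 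Concretely, for an unpunctured hexagon (type $A_3$, not an exceptional case) $\EGT(\surf)$ is the $1$-skeleton of the associahedron, whose automorphism group contains the dihedral group of order $12$ coming from all symmetries of the hexagon, whereas $\TM(\surf)\cong\Aut_0\C{}{\surf}\cong\ZZ/6$ is generated by the rotation. So your chain $\TM(\surf)\subseteq\Aut_0\C{}{\surf}\subseteq\Aut(\EGT(\surf))$ cannot collapse, and the key step of your induction sets out to prove something false. The source of the problem is visible in your recognition step: the pattern of squares and pentagons recovers the quiver $Q(\st)$ only ``up to the symmetries of its diagram,'' i.e.\ it recovers the underlying valued graph but not the orientation of the arrows, and the orientation is exactly what separates $Q(\st)$ from $Q(\st)^{\rm op}$, hence orientation-preserving from orientation-reversing symmetries.

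The paper avoids this by working with more than the abstract graph: an element $F$ of $\Aut_0\C{}{\surf}$ is induced by a triangle equivalence, so it sends the quiver of a reachable cluster-tilting set to a genuinely isomorphic (not merely anti-isomorphic) quiver---this is the input quoted from \cite{ASS}---and then Proposition~8.5 of \cite{BS} is applied, which says that two tagged triangulations whose quivers with potential are isomorphic differ by an element of $\TM(\surf)$, except for the three listed surfaces. To repair your argument you should replace $\Aut(\EGT(\surf))$ by the group of automorphisms of the exchange graph that preserve the oriented quiver data attached to each vertex; with that correction your inductive cutting sketch becomes essentially the content of \cite{BS} and the rest of the reduction goes through.
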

\begin{proof}
An element $F$ in $\Aut_0\C{}{\surf}$ acts on the exchange graph $\CEG{*}{\surf}$ of clusters $({\bf x},Q_{\bf x})$, and  it is shown in \cite{ASS} that $F$ maps a quiver $Q_{\bf x}$ of a cluster to an isomorphic quiver. But then it follows from Proposition 8.5 in \cite{BS} that $F$ is induced by an element in the mapping class group $\TM({\surf})$ provided the surface is not one of the exceptions listed in the proposition, hence $\iota_\surf\colon\TM(\surf)\hookrightarrow\Aut_0\C{}{\surf}$ is surjective.
\end{proof}

\clearpage

TB: Bishop's University

2600 College Street
Sherbrooke, Quebec, J1M 1Z7, Canada. (tbruestl@ubishops.ca)

YQ: NTNU

Department of Mathematical Sciences, NTNU,
7491 Trondheim,
Norway. (Yu.Qiu@bath.edu)

\end{document}